\newcommand{\eps}{\epsilon}
\newcommand{\bbC}{\mathbb{C}}
\newcommand{\bbD}{\mathbb{D}}
\newcommand{\bbZ}{\mathbb{Z}}
\newcommand{\bbN}{\mathbb{N}}
\newcommand{\bbR}{\mathbb{R}}
\newcommand{\bbM}{\mathbb{M}}
\newcommand{\tr}{\mathrm{tr}}
\newcommand{\End}{\mathrm{End}}
\newcommand{\Hom}{\mathrm{Hom}}
\newcommand{\lag}{\mathfrak{g}}
\newcommand{\lah}{\mathfrak{h}}
\newcommand{\laq}{\mathfrak{q}}
\newcommand{\laso}{\mathfrak{so}}
\newcommand{\GL}{\mathrm{GL}}
\newcommand{\SL}{\mathrm{SL}}
\newcommand{\SO}{\mathrm{SO}}
\newcommand{\Sp}{\mathrm{Sp}}
\newcommand{\Spin}{\mathrm{Spin}}
\newcommand{\SU}{\mathrm{SU}}
\newcommand{\U}{\mathrm{U}}
\newcommand{\rank}{\mathrm{rank}}
\newcommand{\ind}{\mathrm{ind}}
\newcommand{\pr}{\mathrm{pr}}
\newcommand{\G}{\mathrm{G}}
\theoremstyle{plain}
\newtheorem{theorem}{Theorem}[section]
\newtheorem{lemma}[theorem]{Lemma}
\newtheorem{proposition}[theorem]{Proposition}
\newtheorem{corollary}[theorem]{Corollary}
\newtheorem{definition}[theorem]{Definition}
\theoremstyle{definition}
\newtheorem{example}[theorem]{Example}
\newtheorem{remark}[theorem]{Remark}
\begin{document}

\title[MF--induced representations and orthogonal polynomials]{Multiplicity free induced representations and orthogonal polynomials}

\author{Maarten van Pruijssen}

\address{Universit{\"a}t Paderborn\\
Institut f{\"u}r Mathematik\\
Warburger Str. 100\\
D-33098 Paderborn }

\email{vanpruijssen@math.upb.de}

\subjclass[2010]{20G05, 33C45}

\keywords{spherical varieties, multiplicity free representations, orthogonal polynomials}

\maketitle

\begin{center}
\textit{Dedicated to Gert Heckman for his 60th birthday.}
\end{center}

\begin{abstract}
Let $(G,H)$ be a reductive spherical pair and $P\subset H$ a parabolic subgroup such that $(G,P)$ is spherical. The triples $(G,H,P)$ with this property are called multiplicity free systems and they are classified in this paper. Denote by $\pi^{H}_{\mu}=\ind_{P}^{H}\mu$ the Borel-Weil realization of the irreducible $H$-representation of highest weight $\mu\in P^{+}_{H}$ and consider the induced representation $\ind_{P}^{G}\chi_{\mu}=\ind_{H}^{G}\pi^{H}_{\mu}$, a multiplicity free induced representation. Some properties of the spectrum of the multiplicity free induced representations are discussed. For three multiplicity free systems the spectra are calculated explicitly. The spectra give rise to families of multi-variable orthogonal polynomials which generalize families of Jacobi polynomials: they are simultaneous eigenfunctions of a commutative algebra of differential operators, they satisfy recurrence relations and they are orthogonal with respect to integrating against a matrix weight on a compact subset. We discuss some difficulties in describing the theory for these families of polynomials in the generality of the classification.
\end{abstract}

\tableofcontents

\newpage


\section{Introduction}

Multiplicity free representations of Lie groups are closely connected to special functions. One of the reasons is that as a general rule, multiplicity freeness implies commutativity on various levels. As an example we mention the Jacobi polynomials with geometric parameters which can be obtained from matrix coefficients of a Lie group $G$ that are invariant by translations over a symmetric subgroup $K$. The convolution algebra of $K$-biinvariant functions is commutative and spanned by the Jacobi polynomials. On the level of Lie algebras, the multiplicity free occurrence of the trivial representation in the restriction of irreducible $G$-representations is reflected in the fact that the algebra of $K$-invariant differential operators $U(\lag)^{K}$ admits a commutative quotient. The Harish-Chandra isomorphism transforms operators in this quotient into differential operators of hypergeometric type for functions on a Euclidean space of dimension $\mathrm{rk}(G/K)$. The Jacobi polynomials are simultaneous eigenfunctions for these differential operators. Another property of the Jacobi polynomials is that they satisfy recurrence relations.

Spherical pairs (Definition \ref{def: spherical pairs}) have similar multiplicity free properties and it is thus natural to ask similar questions about the harmonic analysis on spherical spaces. In this paper we use algebraic methods to study some of these questions. Another method to attack problems concerning multiplicity free representations is the theory of visible actions and propagation of multiplicity free representations of Kobayashi \cite{Kobayashi1, Kobayashi2}. These may be particularly helpful when one wants to perform similar analysis on the non-compact Cartan duals of the Lie groups studied in Sections \ref{section: examples} and \ref{section: structure}. 

We consider triples of groups with multiplicity free properties which give rise to matrix valued special functions. For the compact symmetric pair $(\SU(3),\U(2))$ of rank one these functions appear already in \cite{GPT}, in a quest for families of matrix valued orthogonal polynomials that have a Sturm-Liouville property: they are simultaneous eigenfunctions for a second order differential operator. This idea is pushed further in \cite{Koelink--van Pruijssen--Roman1,Koelink--van Pruijssen--Roman2} based on ideas from \cite{Koornwinder}, to end in a general construction of families of matrix valued orthogonal polynomials in \cite{Heckman van Pruijssen} for spherical pairs of rank one. Moreover, the polynomials are simultaneous eigenfunctions for a commutative algebra of differential operators, another commutative quotient of $U(\lag)^{K}$. The essential ingredient for the construction in \cite{Heckman van Pruijssen} is multiplicity free induction from $H$ to $G$ for a spherical pair $(G,H)$. Besides that, the spectrum of such a $G$-representation must admit a suitable partial ordering. 

In this paper we classify the data that is needed for an extension of this theory. We calculate the spectra of three examples and show that they admit a suitable partial ordering. Furthermore, we point out difficulties, mostly in the structure theory for spherical pairs, for a complete generalization of the matrix valued polynomials. We work on the level of algebraic groups defined over $\bbC$, except in the last two sections, where we also consider compact real forms of reductive algebraic groups.

\begin{definition} A $G$-variety $X$ is called spherical if it is normal and if it admits an open orbit for the action of a Borel subgroup $B\subset G$. A pair $(G,F)$ consisting of a reductive group $G$ and a closed algebraic subgroup $F$ is called spherical if $G/F$ is a spherical $G$-variety. 
\end{definition}

\begin{definition}\label{def: spherical pairs}
A triple $(G,H,P)$ is called a multiplicity free system (MFS) if $G$ is connected, reductive, $H\subset G$ is connected, reductive and $P\subset H$ is a parabolic subgroup such that $(G,P)$ is spherical.
\end{definition}

The notion of a MFS depends only on the Lie algebras, so for the classification it is sufficient to look at the indecomposable ones, i.e.~those not of the form $(G_{1}\times G_{2}, H_{1}\times H_{2}, P_{1}\times P_{2})$ where $(G_{i},H_{i},P_{i})$ are MFSs. Moreover, we assume all groups to be connected. The group $G$ may be assumed to be semi-simple because a possible center is always contained in any Borel subgroup. The definition of a MFS implies that $(G,H)$ is a spherical pair and these have been classified by Kr{\"a}mer \cite{Kramer} and Brion \cite{Brion1}. Hence the list of candidates is short.

We need not be concerned with the spherical pairs that are symmetric, as those MFSs have been classified in \cite{He et al}. The rank one cases were classified in \cite{Heckman van Pruijssen}. The list of MFSs $(G,H,P)$ with $(G,H)$ non-symmetric and $P$ non-trivial, i.e.~$P\ne H$, turns out to be fairly small: In Section \ref{section: MFS} we find 11 examples among which there are 8 families. 

Given a MFS $(G,H,P)$ and a character $\mu:P\to\bbC^{\times}$, the induced representation $\pi^{H}_{\mu}=\ind_{P}^{H}\mu$ is irreducible by the Borel-Weil Theorem. The induction of $\pi_{\mu}^{H}$ to $G$ decomposes multiplicity free by Frobenius Reciprocety and the spectrum $P^{+}_{G}(\mu)$ of $\ind^{G}_{H}\pi^{H}_{\mu}$ is important in view of the construction of the polynomials. We discuss the some quantitative properties of $P^{+}_{G}(\mu)$ in Section \ref{section: spectrum} and we provide a result concerning the stability of multiplicities.

An explicit description of the spectra $P^{+}_{G}(\mu)$ is obtained in Section \ref{section: examples} for the three MFSs $(G,H,P)$ where $P$ is non-trivial and where $(G,H)$ is $(\Spin_{9},\Spin_{7})$, $(H\times H,H)$ with $H=\SL_{n+1}$, or $(\Sp_{2m}\times\Sp_{2n},\Sp_{2m-2}\times\Sp_{2}\times\Sp_{2n-2})$.

The spectra that are known behave well with respect to the decomposition of the tensor product with fundamental spherical representations. This leads to a theory of families of multi-variable matrix valued orthogonal polynomials, which is discussed in Section \ref{section: OP}. In Section \ref{section: structure} we discuss some difficulties for generalizing this construction to the MSFs in the classifications.

The following notations and conventions are employed in this paper: Groups are indicated with Latin capitals, their Lie algebras with their gothic counterparts. The roots and weights that occur are numbered as in \cite[App.~C]{KnappLGBI}. The weight semi-group of an algebraic group $G$ is denoted by $P^{+}_{G}$. Given a weight $\lambda\in P^{+}_{G}$, an irreducible representation of $G$ of highest weight $\lambda$ is denoted by $\pi^{G}_{\lambda}$ and its representation space by $V^{G}_{\lambda}$. The restriction $\pi^{G}_{\lambda}|_{H}$ of the irreducible representation $\pi^{G}_{\lambda}$ to a reductive subgroup $H\subset G$ decomposes into irreducible $H$-representations $\pi^{H}_{\mu}$. Their multiplicities are denoted by $m^{G,H}_{\lambda}(\mu)$. The dual vector space of a vector space $V$ is denoted by $V^{\vee}$. The weight of the irreducible representation contragredient to $\pi^{G}_{\lambda}$ is denoted by $\lambda^{\vee}$.


\section{Multiplicity free systems}\label{section: MFS}

The following result has been established in e.g. \cite[Lemma 5.2]{Knop van Steirteghem} and \cite[\S2.1]{Panyushev}.

\begin{lemma}
Let $B\subset G$ be a Borel subgroup such that $BH\subset G$ is open. Then $B\cap H$ is a Borel subgroup of a generic isotropy group $H_{*}\subset H$ for $H$ acting on $\lah^{\perp}\subset\lag^{\vee}$.
\end{lemma}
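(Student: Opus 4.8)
The plan is to reformulate the statement in terms of the cotangent bundle of $G/H$. Realize it as the associated bundle $T^{*}(G/H)=G\times_{H}\lah^{\perp}$, with fibre the $H$-module $\lah^{\perp}\cong(\lag/\lah)^{\vee}$. For a point $[g,\xi]$ the stabilizer in $G$ is $gH_{\xi}g^{-1}$, where $H_{\xi}\subset H$ is the isotropy group of $\xi$; hence the generic isotropy group of $G$ on $T^{*}(G/H)$ is, up to conjugacy, the generic isotropy group $H_{*}$ of $H$ on $\lah^{\perp}$. Since $BH$ is open, $B$ acts on $G/H$ with a dense orbit $\Omega\cong B/(B\cap H)$, and the restriction $T^{*}(G/H)|_{\Omega}\cong B\times_{B\cap H}\lah^{\perp}$ is a dense, $B$-stable subset of $T^{*}(G/H)$. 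Moreover the locus of generic $G$-isotropy is a dense open $G$-stable subset of the form $G\times_{H}\lah^{\perp}_{\mathrm{reg}}$ with $\lah^{\perp}_{\mathrm{reg}}\subset\lah^{\perp}$ a dense open $H$-stable subset on which $H_{\xi}$ is conjugate to $H_{*}$.

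The crux is to produce a point $\xi\in\lah^{\perp}_{\mathrm{reg}}$ that is fixed by all of $B\cap H$. Equivalently, one must show that $H\cdot(\lah^{\perp})^{B\cap H}$ is dense in $\lah^{\perp}$: then the linear subspace $(\lah^{\perp})^{B\cap H}$, being carried by $H$ onto a dense subset, meets the $H$-stable dense open set $\lah^{\perp}_{\mathrm{reg}}$, and picking $\xi$ in the intersection gives $B\cap H\subseteq H_{\xi}$ with $H_{\xi}$ conjugate to $H_{*}$. This is precisely where sphericity of $G/H$ is used in an essential way, and I would derive it from Knop's description of the coisotropic $G$-variety $T^{*}(G/H)$: its invariant-theoretic quotient is $T^{*}(G/H)/\!\!/G\cong\lah^{\perp}/\!\!/H$, of dimension $\rank(G/H)$, with generic fibre a single closed $G$-orbit. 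From this one reads off simultaneously that $\dim(\lah^{\perp})^{B\cap H}=\rank(G/H)$ and that the generic orbit satisfies $\dim(H\cdot\xi)=\dim\lah^{\perp}-\rank(G/H)$, which yields the density. (In the symmetric case, and in the examples treated in this paper, one can instead argue by hand, exhibiting a ``Cartan subspace'' $\laa\subset\lah^{\perp}$ with $H\cdot\laa$ dense and $\laa$ fixed pointwise by $B\cap H$.)

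It then remains to identify $B\cap H$ with a Borel subgroup of $H_{\xi}$. It is solvable, being contained in $B$, so (passing to the identity component and treating components separately) the Borel fixed point theorem applied to the action of $B\cap H$ on the flag variety of $H_{\xi}$ shows that $B\cap H$ lies in some Borel subgroup of $H_{\xi}$. Equality follows by a dimension count: from $\Omega$ open one has $\dim(B\cap H)=\dim B+\dim H-\dim G$, whereas a Borel of $H_{\xi}$ has dimension $\tfrac{1}{2}(\dim H_{\xi}+\rank H_{\xi})$; substituting $\dim H_{\xi}=\dim H-\dim(H\cdot\xi)$ and the value of $\dim(H\cdot\xi)$ found above, the two expressions coincide exactly because $\rank(G)=\rank(G/H)+\rank(H_{*})$, a known identity in the structure theory of spherical homogeneous spaces (and one that can in any case be checked directly in the cases of interest here).

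The main obstacle is the density statement $H\cdot(\lah^{\perp})^{B\cap H}=\lah^{\perp}$, together with the accompanying orbit-dimension and rank identities: this is the genuinely non-formal input, and it is cleanest to obtain it from Knop's analysis of the moment map on $T^{*}(G/H)$ (or from Panyushev's complexity and rank estimates); the rest is bookkeeping with associated bundles and a dimension count.
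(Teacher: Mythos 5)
The first thing to say is that the paper does not prove this lemma at all: it is quoted as established in \cite[Lemma 5.2]{Knop van Steirteghem} and \cite[\S2.1]{Panyushev}, so there is no in-paper argument to measure yours against. Your overall strategy --- pass to $T^{*}(G/H)=G\times_{H}\lah^{\perp}$, produce a point $\xi\in\lah^{\perp}$ with isotropy conjugate to $H_{*}$ that is fixed by $B\cap H$, and then upgrade the inclusion $B\cap H\subseteq H_{\xi}$ to ``Borel subgroup'' by a dimension count --- has the right shape and is close in spirit to those references, and your final dimension count does close up correctly, granting the reductivity of $H_{*}$ (true because $\lah^{\perp}$ is a self-dual $H$-module, but worth saying) and formula (\ref{formulaB}).

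The genuine gap is at the step you yourself flag as the crux. You assert that from $T^{*}(G/H)/\!\!/G\cong\lah^{\perp}/\!\!/H$ having dimension $r(G/H)$ with generic fibre a single closed orbit one ``reads off'' that $\dim(\lah^{\perp})^{B\cap H}=r(G/H)$ and hence the density of $H\cdot(\lah^{\perp})^{B\cap H}$. Neither statement follows from the facts you cite: those facts concern $H$-invariant functions and generic $H$-orbits on $\lah^{\perp}$ and make no reference whatsoever to the subgroup $B\cap H$, which is defined by the $B$-action on $G/H$. Forging the link between these two pieces of data is precisely the content of the lemma; in the standard proofs it is supplied by the local structure theorem (the identification of $H_{*}$ with the kernel $L_{0}$ of the Levi action on a slice, cf.\ Section \ref{section: spectrum} of the paper and \cite[Thm.~9.1]{Timashev}) or by Panyushev's argument, and your sketch omits it. Indeed, the density you want is essentially equivalent to the containment $B\cap H\subseteq H_{*}$ up to conjugacy, which is the heart of what is to be shown, so as written the argument begs the question. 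A secondary issue: even granting both dimension statements, density of $H\cdot(\lah^{\perp})^{B\cap H}$ does not follow from them alone --- they only give $\dim H\cdot(\lah^{\perp})^{B\cap H}\le\dim\lah^{\perp}$, and one still needs that a generic $H$-orbit meets $(\lah^{\perp})^{B\cap H}$ in a finite set.
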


This result provides a criterion for a triple $(G,H,P)$ to be a MFS. 

\begin{proposition}\label{prop: fiber reduction}
Let $G$ be a reductive group and $H\subset G$ a reductive spherical subgroup. Let $P\subset H$ be a parabolic subgroup and let $H_{*}$ be a generic isotropy group for $H$ acting on $\lah^{\perp}$. The pair $(G,P)$ is spherical if and only if $H/P$ is $H_{*}$-spherical.
\end{proposition}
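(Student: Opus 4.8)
The plan is to string together a chain of equivalences relating a dense $B$-orbit on $G/P$ to a dense Borel orbit on $H/P$, and then to invoke the Lemma stated above. Since $H$ is spherical I may fix a Borel subgroup $B\subset G$ with $BH$ open in $G$, and because all Borel subgroups of $G$ are conjugate, $(G,P)$ is spherical precisely when this particular $B$ has a dense orbit on the variety $G/P$ (which is automatically normal, being homogeneous, so that checking the open Borel orbit is all that is required). Write $\pi\colon G/P\to G/H$ for the natural projection, with fibre $H/P$, and let $O_{0}=B\cdot eH\cong B/(B\cap H)$ be the open $B$-orbit on the spherical space $G/H$. Then $U:=\pi^{-1}(O_{0})=(BH)/P$ is a nonempty open $B$-stable subset of $G/P$; since a dense $B$-orbit on $G/P$ is open, it must lie inside any such $U$, so $B$ has a dense orbit on $G/P$ if and only if it has one on $U$.

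Next I would identify $U$ with the twisted product $B\times_{B\cap H}(H/P)$, $B$-equivariantly, via $[b,hP]\mapsto bhP$; well-definedness, bijectivity, and equivariance are immediate from $P\subset H$. For a twisted product $B\times_{B\cap H}Z$ with $Z$ irreducible, every point is $B$-conjugate to some $[e,z]$ and $\mathrm{Stab}_{B}([e,z])=\mathrm{Stab}_{B\cap H}(z)$, so comparing the orbit dimension $\dim B-\dim\mathrm{Stab}_{B\cap H}(z)$ with $\dim\bigl(B\times_{B\cap H}Z\bigr)=\dim B-\dim(B\cap H)+\dim Z$ shows that $B$ has a dense orbit on $B\times_{B\cap H}Z$ if and only if $B\cap H$ has a dense orbit on $Z$. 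Taking $Z=H/P$, we conclude that $(G,P)$ is spherical if and only if $B\cap H$ has a dense orbit on $H/P$. By the Lemma, $B\cap H$ is a Borel subgroup of $H_{*}$, so this last condition is exactly the assertion that $H/P$ is $H_{*}$-spherical (here $H/P$ is smooth, hence normal, so again only the open Borel orbit needs checking).

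The twisted-product bookkeeping and the elementary openness/dimension remarks are routine; the substance of the argument is carried by the cited Lemma, which is what permits replacing the concrete group $B\cap H$ by ``a Borel subgroup of $H_{*}$''. The one point that requires care is the interpretation of ``$H_{*}$-spherical'' in the case where $H_{*}$, being a generic isotropy group of a linear action on $\lah^{\perp}$, need not be connected or reductive: one must adopt for ``Borel subgroup of $H_{*}$'' the same convention that is used in the Lemma, after which the final equivalence is immediate.
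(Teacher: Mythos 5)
Your argument is correct and follows essentially the same route as the paper's proof: restrict the projection $G/P\to G/H$ over the open $B$-orbit $BH/H\cong B/(B\cap H)$, reduce the existence of an open $B$-orbit to the existence of an open $(B\cap H)$-orbit on the fibre $H/P$, and conclude via the cited Lemma identifying $B\cap H$ with a Borel subgroup of $H_{*}$. You merely spell out the twisted-product bookkeeping that the paper leaves implicit.
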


\begin{proof}
Let $B\subset G$ be a Borel subgroup such that $BH\subset G$ is open. Consider the map $c:G/P\to G/H$. Since $c^{-1}(BH/H)$ is open and $B$-stable, $G/P$ has an open $B$-orbit if and only if $c^{-1}(BH/H)$ has an open $B$-orbit. The latter holds if and only if $B\cap H$ has an open orbit in the fiber $c^{-1}(H/H)=H/P$ since $BH/H\cong B/(B\cap H)$. This is equivalent to $H/P$ being $H_{*}$-spherical.   
\end{proof}

Let $X$ be a $G$-variety. The complexity $c(X)$ is the codimension of a generic $B$-orbit. The weight lattice $\Lambda(X)$ is the set of weights of all rational $B$-eigenfunctions and its rank is called the (spherical) rank of $X$. The complexity and rank of $G/H$ are related to the rank and dimension of $G$ and $H_{*}$ according to the formulas:
\begin{eqnarray}
2c(G/H)+r(G/H)=\dim G-2\dim H+\dim H_{*},\label{formulaA}\\
r(G/H)=\rank(G)-\rank(H_{*}),\label{formulaB}
\end{eqnarray}
see e.g.~\cite[Ch.~9]{Timashev}. A necessary condition for $H/P$ to be $H_{*}$-spherical is $\dim B_{H_{*}}+\dim P\ge\dim H$. With Proposition \ref{prop: fiber reduction} and formulas (\ref{formulaA}) and (\ref{formulaB}), this implies the following result.

\begin{corollary}\label{cor: criterion}
A necessary condition for $(G,H,P)$ to be a MFS is $\dim P\ge|R^{+}_{G}|$.
\end{corollary}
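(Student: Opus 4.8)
The plan is to chase the necessary condition through Proposition~\ref{prop: fiber reduction} and the dimension formulas, being careful about which ``rank'' and ``dimension'' quantities enter. By Proposition~\ref{prop: fiber reduction}, if $(G,H,P)$ is a MFS then $H/P$ is $H_{*}$-spherical, where $H_{*}$ is a generic isotropy group for $H$ acting on $\lah^{\perp}$. A homogeneous space $H/P$ being $H_{*}$-spherical means a Borel subgroup $B_{H_{*}}\subset H_{*}$ has an open orbit on $H/P$; since an open orbit has full dimension, this forces $\dim B_{H_{*}}+\dim P\ge\dim H$, which is the stated necessary condition (the paper records this inequality just before the corollary). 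So the real content is to convert $\dim B_{H_{*}}$ and $\dim H$ into the quantity $|R^{+}_{G}|$ using \eqref{formulaA} and \eqref{formulaB}.

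First I would write $\dim B_{H_{*}}=\tfrac12(\dim H_{*}+\rank H_{*})$, the standard formula for the dimension of a Borel in the reductive group $H_{*}$ (here one uses that the relevant generic isotropy group is reductive, which is part of the setup around Kr\"amer/Brion's classification of reductive spherical pairs; alternatively the identity $\dim B=\tfrac12(\dim G+\rank G)$ is what is being invoked). Substituting this, the necessary inequality becomes
\begin{equation*}
\tfrac12\bigl(\dim H_{*}+\rank H_{*}\bigr)+\dim P\ge\dim H.
\end{equation*}
Now I would solve \eqref{formulaA} for $\dim H_{*}$, giving $\dim H_{*}=2c(G/H)+r(G/H)-\dim G+2\dim H$, and use \eqref{formulaB} in the form $\rank H_{*}=\rank G-r(G/H)$. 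Adding these, the terms $\pm r(G/H)$ cancel and $\dim H_{*}+\rank H_{*}=2c(G/H)-\dim G+2\dim H+\rank G$. Hence
\begin{equation*}
\tfrac12\bigl(2c(G/H)-\dim G+2\dim H+\rank G\bigr)+\dim P\ge\dim H,
\end{equation*}
which rearranges to $\dim P\ge\tfrac12(\dim G-\rank G)-c(G/H)=|R^{+}_{G}|-c(G/H)$. Since the complexity satisfies $c(G/H)\ge0$, this gives $\dim P\ge|R^{+}_{G}|-c(G/H)$; but for a MFS one actually has $(G,H)$ spherical, so $c(G/H)=0$ and the inequality collapses to exactly $\dim P\ge|R^{+}_{G}|$.

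The main thing to get right is the bookkeeping of the half-sums and making sure the $r(G/H)$ contributions from \eqref{formulaA} and \eqref{formulaB} cancel as claimed; there is no deep obstacle, just the need to invoke $\dim B=\tfrac12(\dim G+\rank G)$ for $H_{*}$ and to remember that sphericity of $(G,H)$ (which holds by hypothesis in a MFS, and is needed anyway for $H_{*}$ to be defined as in the preceding Lemma) forces $c(G/H)=0$. One small point worth a sentence in the write-up: the inequality $\dim B_{H_{*}}+\dim P\ge\dim H$ is merely the dimension count for the existence of \emph{an} open $B_{H_{*}}$-orbit, so it is genuinely only necessary, not sufficient — consistent with the statement of the corollary.
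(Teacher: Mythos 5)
Your proposal is correct and follows exactly the route the paper intends: it starts from the necessary condition $\dim B_{H_{*}}+\dim P\ge\dim H$ for $H/P$ to be $H_{*}$-spherical (via Proposition~\ref{prop: fiber reduction}), and combines it with formulas (\ref{formulaA}) and (\ref{formulaB}) and $c(G/H)=0$ to land on $\dim P\ge|R^{+}_{G}|$. The bookkeeping with $\dim B_{H_{*}}=\tfrac12(\dim H_{*}+\rank H_{*})$ and the cancellation of $r(G/H)$ is exactly right, and your final inequality agrees with the paper's remark that the corollary is the ordinary dimension condition $\dim B+\dim P\ge\dim G$.
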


This is in fact the ordinary dimension condition $\dim B+\dim P\ge\dim G$. The strategy to obtain a classification of MFSs is going down the list of reductive spherical pairs that was obtained by Brion \cite{Brion1}, restrict to all the non-symmetric examples $(G,H)$ (collected in \cite[Tables 10.1, 10.3]{Timashev}) and then check for all the parabolic subgroups $P\subset H$ whether they are spherical in $G$.

\begin{table}[h!]
\begin{center}
 \begin{tabular}{|c|c|c|c|c|}
  \hline
 \textnumero & $G$ & $H$  & $H_{*}$ & $J^{c}_{H}$\\
  \hline
 1a & $\SL_{n+m}$  & $\SL_{m}\times\SL_{n}$  & $\SL_{m-n}\times S((\bbC^{\times})^{n})$ & $\emptyset$\\
&&$m> n\ge3$&&\\
    \hline
 1b & $\SL_{n+2}$  & $\SL_{n}\times\SL_{2}$ & $\SL_{n-2}\times S((\bbC^{\times})^{2})$ & $\{\alpha_{n+1}\}$ \\
&&$n\ge3$&& \\
   \hline
 1c & $\SL_{m+1}$  & $\SL_{m},m\ge2$ & $\SL_{m-1}$ & $\Pi_{H}\backslash\{\alpha_{i}\}$ \\
    \hline

2 & $\SL_{2n+1}$  & $\Sp_{2n}\times\bbC^{\times}$ & $\bbC^{\times}$ & $\emptyset$\\
    \hline
3 & $\SL_{2n+1}$  & $\Sp_{2n}$ & $\{e\}$  & $\emptyset$\\
    \hline
4 & $\Sp_{2n}$  & $\Sp_{2n-2}\times\bbC^{\times}$ & $\Sp_{2n-4}$  & $\{\alpha_{n-1}\}$\\
    \hline
5 & $\SO_{2n+1}$  & $\GL_{n}$ & $\{e\}$ & $\emptyset$\\
    \hline
6 & $\SO_{4n+2}$  & $\SL_{2n+1}$ &  $(\SL_{2})^{n}$& $\emptyset$\\
    \hline
7 & $\SO_{10}$  & $\Spin_{7}\times\SO_{2}$ & $\SL_{2}$  & $\emptyset$\\
    \hline
8 & $\SO_{9}$  & $\Spin_{7}$ & $\SL_{3}$ & $\{\alpha_{1}\}$\\
    \hline
9 & $\SO_{8}$  & $\G_{2}$ & $\SL_{2}$ & $\emptyset$\\
    \hline
10 & $\SO_{7}$  & $\G_{2}$ & $\SL_{3}$ & $\{\alpha_{1}\},\{\alpha_{2}\}$\\
    \hline
11 & $\mathrm{E}_{6}$  & $\Spin_{10}$ & $\SL_{4}$  & $\emptyset$\\
    \hline
12 & $\G_{2}$  & $\SL_{3}$ &  $\SL_{2}$ & $\{\alpha_{1}\},\{\alpha_{2}\}$\\
    \hline

\end{tabular}
  \vspace{0.2cm}
\caption{MFSs where $(G,H)$ spherical, non-symmetric and $G$ simple. For nos.~1a,b and 4 the roots of $H$, the semi-simple part of a maximal non-trivial Levi subgroup, are identified with the roots of $G$.}\label{table1}
\end{center}
\end{table}

\begin{remark}\label{remark: tables}
The embeddings of the spherical subgroups $H\subset G$ in Tables \ref{table1}, \ref{table2} are the standard ones according to the literature \cite{Brion1, Kramer, Timashev}. The subgroups $H_{*}\subset H$ are indicated up to isogeny. For most cases the groups have been determined already in \cite{Knop van Steirteghem}. The parabolic subgroups of $H$ are given by subgroups $J_{H}\subset\Pi_{H}$. As the parabolic subgroups are big, the complements $J_{H}^{c}$ are displayed. Subsets of $J_{H}^{c}$ also give parabolic subgroups of $H$ that are spherical in $G$. These are not indicated.
\end{remark}

\begin{table}[h!]
\begin{center}
 \begin{tabular}{|m{5mm}|m{4.5cm}|m{10mm}|}
  \hline
\textnumero & \hspace{10mm}$H_{*}\subset H\subset G$ & $J^{c}_{H}$ \\
\hline
\hspace{1mm}1&
\begin{tikzpicture}[scale=0.2]
\begin{scope}
\clip (1,3) rectangle (24,8);

\draw[black] (4,6) -- (4,4) -- (6,6) -- (6,4);

\draw[fill=white] (4,6) circle (15pt);
\draw[fill=white] (6,6) circle (15pt);

\draw[fill=black] (4,4) circle (14pt);
\draw[fill=black] (6,4) circle (14pt);


\draw (16,6.2) node {\(A_{n-1}A_{n}\)};
\draw (16,4) node {\(A_{n-1}T\)};

\end{scope}
\end{tikzpicture}
&\hspace{2mm}$\emptyset$\\
\hline
\hspace{1mm}2&
\begin{tikzpicture}[scale=0.2]
\begin{scope}
\clip (1,1) rectangle (24,8);

\draw[black] (4,2) -- (4,6) -- (6,4) -- (6,6);

\draw[fill=white] (4,6) circle (15pt);
\draw[fill=white] (6,6) circle (15pt);

\draw[fill=black] (4,4) circle (14pt);
\draw[fill=black] (6,4) circle (14pt);

\draw[fill=lightgray] (4,2) circle (15pt);

\draw (16,6.2) node {\(C_{n}C_{2}\)};
\draw (16,4) node {\(C_{n-2}C_{2}\)};
\draw (16,1.8) node {\(C_{n-4}\)};

\end{scope}
\end{tikzpicture}
&\hspace{2mm}$\emptyset$\\ 
\hline
\hspace{1mm}3&
\begin{tikzpicture}[scale=0.2]
\begin{scope}
\clip (1,1) rectangle (24,8);

\draw[black] (3,2) -- (3,4) -- (4,6) -- (5,4) -- (6,6) -- (7,4) -- (7,2);

\draw[fill=white] (4,6) circle (15pt);
\draw[fill=white] (6,6) circle (15pt);

\draw[fill=black] (3,4) circle (14pt);
\draw[fill=black] (5,4) circle (14pt);
\draw[fill=black] (7,4) circle (14pt);

\draw[fill=lightgray] (3,2) circle (15pt);
\draw[fill=lightgray] (7,2) circle (15pt);

\draw (16,6.2) node {\(A_{n-1}C_{m}\)};
\draw (16,4) node {\((TA_{n-3})A_{1}C_{m-1}\)};
\draw (16,1.8) node {\((TA_{n-5})C_{m-2}\)};

\end{scope}

\end{tikzpicture}
&\hspace{1mm}$\{\beta\}$\\
\hline
\hspace{1mm}4&
\begin{tikzpicture}[scale=0.2]
\begin{scope}
\clip (1,3) rectangle (24,8);

\draw[black] (4,6) -- (5,4) -- (6,6);

\draw[fill=white] (4,6) circle (15pt);
\draw[fill=white] (6,6) circle (15pt);

\draw[fill=black] (5,4) circle (14pt);

\draw (16,6.2) node {\(B_{n}D_{n}\)};
\draw (16,4) node {\(B_{n}\vee D_{n}\)};

\end{scope}

\end{tikzpicture}
&\hspace{2mm}$\emptyset$ \\ 
\hline
\hspace{1mm}5&
\begin{tikzpicture}[scale=0.2]
\begin{scope}
\clip (1,1) rectangle (24,8);

\draw[black] (3,2) -- (3,4) -- (4,6) -- (5,4) -- (6,6) -- (7,4) -- (7,2);

\draw[fill=white] (4,6) circle (15pt);
\draw[fill=white] (6,6) circle (15pt);

\draw[fill=black] (3,4) circle (14pt);
\draw[fill=black] (5,4) circle (14pt);
\draw[fill=black] (7,4) circle (14pt);

\draw[fill=lightgray] (3,2) circle (15pt);
\draw[fill=lightgray] (7,2) circle (15pt);

\draw (16,6.2) node {\(A_{n-1}C_{m}\)};
\draw (16,4) node {\(A_{n-3}A_{1}C_{m-1}\)};
\draw (16,1.8) node {\((TA_{n-5})C_{m-2}\)};

\end{scope}

\end{tikzpicture}
&\hspace{1mm}$\{\beta\}$\\
\hline
\hspace{1mm}6&
\begin{tikzpicture}[scale=0.2]
\begin{scope}
\clip (1,1) rectangle (24,8);


\draw[black] (2,2) -- (2,6) -- (8,4) -- (4,6) -- (4,2);
\draw[black] (8,4) -- (6,6) -- (6,2);

\draw[fill=white] (2,6) circle (15pt);
\draw[fill=white] (4,6) circle (15pt);
\draw[fill=white] (6,6) circle (15pt);

\draw[fill=black] (2,4) circle (14pt);
\draw[fill=black] (4,4) circle (14pt);
\draw[fill=black] (6,4) circle (14pt);
\draw[fill=black] (8,4) circle (14pt);

\draw[fill=lightgray] (2,2) circle (15pt);
\draw[fill=lightgray] (4,2) circle (15pt);
\draw[fill=lightgray] (6,2) circle (15pt);

\draw (16,6.2) node {\(C_{l}C_{m}C_{n}\)};
\draw (16.6,4) node {\(C_{l-1}C_{m-1}C_{n-1}C_{1}\)};
\draw (16,1.8) node {\(C_{l-2}C_{m-2}C_{n-2}\)};

\end{scope}

\end{tikzpicture}
&$\{\beta,\beta'\}$\\ 
\hline
\hspace{1mm}7&

\begin{tikzpicture}[scale=0.2]
\begin{scope}
\clip (1,1) rectangle (24,8);

\draw[black] (3,2) -- (3,4) -- (4,6) -- (5,4) -- (6,6) -- (7,4) -- (7,2);
\draw[black] (5,4) -- (5,2);

\draw[fill=white] (4,6) circle (15pt);
\draw[fill=white] (6,6) circle (15pt);

\draw[fill=black] (3,4) circle (14pt);
\draw[fill=black] (5,4) circle (14pt);
\draw[fill=black] (7,4) circle (14pt);

\draw[fill=lightgray] (3,2) circle (15pt);
\draw[fill=lightgray] (5,2) circle (15pt);
\draw[fill=lightgray] (7,2) circle (15pt);

\draw (16,6.2) node {\(C_{n}C_{m}\)};
\draw (16,4) node {\(C_{n-1}C_{1}C_{m-1}\)};
\draw (16,1.8) node {\(C_{n-2}TC_{m-2}\)};

\end{scope}

\end{tikzpicture}
&\hspace{1mm}$\{\beta\}$\\
\hline
\hspace{1mm}8&
\begin{tikzpicture}[scale=0.2]
\begin{scope}
\clip (1,1) rectangle (24,8);

\draw[black] (2,2) -- (2,4) -- (3,6) -- (4,4) -- (5,6) -- (6,4) -- (7,6) -- (8,4) -- (8,2);

\draw[fill=white] (3,6) circle (15pt);
\draw[fill=white] (5,6) circle (15pt);
\draw[fill=white] (7,6) circle (15pt);

\draw[fill=black] (2,4) circle (14pt);
\draw[fill=black] (4,4) circle (14pt);
\draw[fill=black] (6,4) circle (14pt);
\draw[fill=black] (8,4) circle (14pt);

\draw[fill=lightgray] (2,2) circle (15pt);
\draw[fill=lightgray] (8,2) circle (15pt);

\draw (16,6.2) node {\(C_{m}C_{2}C_{n}\)};
\draw (16,4) node {\(C_{m-1}C_{1}C_{1}C_{m-1}\)};
\draw (16,1.8) node {\(C_{m-2}C_{n-2}\)};

\end{scope}
\end{tikzpicture}
&\hspace{1mm}$\{\beta,\beta'\}$\\ 

 \hline
 \end{tabular}
  \vspace{0.2cm}
\caption{MFSs where $(G,K)$ spherical, non-symmetric and $G$ not simple. The roots $\beta,\beta'$ are roots of the factors $\SL_{2}$ or $\Sp_{2}$ (without parameter). We have indicated only the Dynkin types, wherre $T$ indicates a torus $\bbC^{\times}$.}\label{table2}
\end{center}
\end{table}

\begin{theorem}\label{thm: classification}
The indecomposable spherical pairs $(G,H,P)$ with $(G,H)$ not symmetric, are classified in Tables 1 and 2 (see Remark \ref{remark: tables} for an explanation of the tables). 
\end{theorem}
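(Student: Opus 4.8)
The plan is to produce the classification by a systematic finite search, using Corollary~\ref{cor: criterion} and Proposition~\ref{prop: fiber reduction} to cut the work down to a manageable size. First I would recall that, by the reduction remarks following Definition~\ref{def: spherical pairs}, it suffices to treat indecomposable triples with $G$ semisimple (in fact simple or a product of two simple factors, corresponding to Tables~\ref{table1} and~\ref{table2}), $H$ connected reductive, and $(G,H)$ a non-symmetric spherical pair. The non-symmetric reductive spherical pairs with $G$ simple are exactly those collected in \cite[Tables~10.1, 10.3]{Timashev} (Kr\"amer \cite{Kramer}, Brion \cite{Brion1}), and the non-simple case reduces to a short list as well (the $H$-diagonal in $H\times H$ for $H$ acting spherically on $G_1$, and the products appearing in Table~\ref{table2}); so the outer loop runs over this explicit finite list. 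For each such $(G,H)$ I would record a generic isotropy group $H_*$ for the $H$-action on $\lah^\perp\subset\lag^\vee$; these are taken (up to isogeny) from \cite{Knop van Steirteghem}, and in the remaining cases computed directly from formulas \eqref{formulaA}, \eqref{formulaB} together with the known complexity and rank of $G/H$. This populates the third column of the tables.

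The inner loop runs over parabolic subgroups $P\subset H$, equivalently over subsets $J_H\subset\Pi_H$ of simple roots of $H$. By Proposition~\ref{prop: fiber reduction}, $(G,P)$ is spherical if and only if the partial flag variety $H/P$ is $H_*$-spherical, so the whole problem is reduced to a question purely internal to $H$: for which $P$ does the reductive subgroup $H_*$ act with a dense Borel orbit on $H/P$? Here I would first apply the crude dimension bound of Corollary~\ref{cor: criterion}, $\dim P\ge |R^+_G|$, equivalently $\dim B_{H_*}+\dim P\ge \dim H$, to discard most $J_H$ immediately; only the largest parabolics (smallest $J_H^c$) survive. For the finitely many surviving candidates I would verify $H_*$-sphericity of $H/P$ directly: this is where one invokes the classification of spherical actions on flag varieties — when $H/P$ is a Grassmannian or projective space this reduces to known multiplicity-free results, and when $H_*$ is small relative to $H$ (e.g.\ $H_*$ a torus, $\SL_2$, or $\SL_3$) one can check the dense-orbit condition by an explicit dimension/stabilizer computation. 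Monotonicity is then used to record only the maximal spherical $J_H^c$ in each row, since any subset of a spherical $J_H^c$ again yields a parabolic spherical in $G$ (as noted in Remark~\ref{remark: tables}); this explains the ``$\emptyset$'' entries (where only $P=H$ works, i.e.\ the MFS is trivial) and the short lists $\{\alpha_i\}$, $\{\beta\}$, $\{\beta,\beta'\}$, etc.

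The main obstacle is the inner verification step: ruling out the borderline candidates that pass the dimension test but are nevertheless not $H_*$-spherical, and conversely confirming sphericity in the genuinely new cases (nos.~1b, 1c, 4, 8, 10, 12 in Table~\ref{table1}, and the boxed roots in Table~\ref{table2}). For these I would combine three tools: (i) Panyushev-type rank additivity and the local structure theorem to compute $c(H/P)$ under the $H_*$-action via a further fiber reduction (writing $B_{H_*}$-orbits on $H/P$ in terms of a generic isotropy group for $H_*$ on $(\lah_*)^\perp\subset\lah^\vee$, exactly as in Lemma~1.3 of the excerpt applied one level down); (ii) for the exceptional-group cases ($\G_2$, $\Spin_7$, $\mathrm{E}_6$, $\Spin_{10}$) an explicit weight-multiplicity computation, using that $H/P$ is spherical in $G$ iff the plethysms $\mathrm{Sym}^k$ of the relevant fundamental representation of $H$ restrict multiplicity-freely, which can be checked on a computer or in LiE; (iii) for the classical families a direct inductive argument on the rank. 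Once every row is checked, reading off the surviving $J_H^c$ and the accompanying data ($H_*$, Dynkin diagram inclusions) yields precisely the entries of Tables~\ref{table1} and~\ref{table2}, proving the theorem.
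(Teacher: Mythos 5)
Your outer skeleton coincides with the paper's: run over Brion's list of non-symmetric reductive spherical pairs, prune with the dimension criterion of Corollary~\ref{cor: criterion}, and then verify the surviving parabolics case by case. The divergence is in the verification machinery, and this is where the actual content of the theorem lives. For Table~\ref{table1} the paper does \emph{not} check $H_*$-sphericity of $H/P$ via Proposition~\ref{prop: fiber reduction} except in a single case (the $(\SO_9,\Spin_7)$ pair, where it exploits the chain $\SL_3\subset\G_2\subset\Spin_7$ and the fact that $\pi^{H}_{k\omega_1}|_{\G_2}$ stays irreducible). For the cases where $H$ sits in a Levi subgroup of $G$ (nos.~1, 4, 6, 11) it instead invokes Brion's regular-embedding criterion \cite[Prop.~I.1]{Brion1}, which linearizes the problem to the adjoint action of a Levi on $\laq_{H,u}$ and then appeals to the \emph{classification of spherical modules} (Benson--Ratcliff, Kac, Leahy, Brion, plus Knop's tables for non-saturated cases); and for nos.~1, 6, 11 it first reduces to the already-classified symmetric case of \cite{He et al} via $H=(K,K)$ and the isomorphism $H/P'\cong K/P$. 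Your substitute — ``known multiplicity-free results for Grassmannians'' and ``explicit dimension/stabilizer computations'' — is not an off-the-shelf classification of spherical actions of arbitrary reductive subgroups on flag varieties, and a dimension count alone never certifies a dense Borel orbit; without the linearization step your positive verifications are not actually carried out.

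The more serious gap is Table~\ref{table2}. You propose to treat it exactly like Table~\ref{table1}, via Proposition~\ref{prop: fiber reduction}; the paper explicitly flags why this is problematic: the embeddings $H_*\subset H$ are in general \emph{not} standard there, so one cannot simply read off whether $H/P$ is $H_*$-spherical. The paper circumvents this entirely by a representation-theoretic argument: it induces in stages through an intermediate reductive subgroup and shows that distinct $H$-types in the first induction would yield overlapping spectra in $G$, hence multiplicities. The key input is Lemma~\ref{lemma: Sp spectrum}, i.e.\ $P^{+}_{G}(\omega_i+\ell\omega_n)\cap P^{+}_{G}(\omega_i+(\ell+2)\omega_n)\ne\emptyset$ for $(\Sp_{2n},\Sp_{2n-2}\times\Sp_2)$, whose proof in turn rests on Kitagawa's stability theorem (Theorem~\ref{thm: stability}). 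Nothing in your proposal supplies this (or the non-standard embeddings that your alternative route would require), so as written the exclusion of parabolics in the $\Sp_{2m}$, $\SL_{m-2}$ and $\GL_{m-2}$ factors of Table~\ref{table2} — which is exactly what determines the final answer $J_H^c=\{\beta\}$ or $\{\beta,\beta'\}$ — is not established.
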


\begin{proof}[Proof for the items in Table \ref{table1}]
The spherical pairs $(G,H)$ in nos.~1,6 and 11 satisfy $H=K'=(K,K)$ (commutator subgroup), where $K\subset G$ is symmetric. The generic isotropy groups $H_{*}$ are smaller than the corresponding generic isotropy groups $K_{*}\subset K$, viz.~$K_{*}/H_{*}=\bbC^{\times}$, see e.g.~\cite[Beh.~3.1]{Kramer}. The parabolic subgroups of $H$ and $K$ are in 1--1-correspondence, denoted by $H\supset P'\leftrightarrow P\subset K$. If $H/P'$ is $H_{*}$-spherical, then $K/P$ is $K_{*}$-spherical, because $H/P'\cong K/P$. This leaves only a few parabolic subgroups that we have to check (those from \cite{He et al}). Note that the subgroups $H$ are contained in a Levi subgroup of $G$. This also holds for no.~4. To see that the subgroups $H\subset G$ and possible parabolic subgroups are spherical we invoke \cite[Prop.~I.1]{Brion1}: Let $L\subset P$ be a Levi subgroup and choose a parabolic subgroup $Q_{P}\subset G$ such that $P$ is regularly embedded in $Q_{P}$. The Levi subgroup of $Q_{P}$ is $L$ or $L\times\bbC^{\times}$. The group $P\subset G$ is spherical if and only if (1) $L\subset H$ is spherical and (2) if $B_{H}\subset H$ is a Borel subgroup with $B_{H}L\subset H$ open, then $B_{H}\cap L$ must have an open orbit in $Q_{P,u}/P_{u}$. Condition (1) is always satisfied and it shows that we may take any Borel subgroup to check property (2). The action of $L$ on $Q_{P,u}/P_{u}$ can be linearized to the adjoint action of $L$ on $\laq_{H,u}$. Spherical actions of reductive groups on vector spaces have been classified in \cite{Benson and Ratcliff, Brion2, Kac,Leahy} and a careful check of the tables leads to the indicated parabolic subgroups $P\subset H$. We used the tables in \cite{Knop: some remarks} to deal with the cases where the representations are not saturated.

The generic isotropy group of no.~2 is abelian, so irreducible representations of $H$ cannot decompose multiplicity free if they are of dimension $>1$. Nos.~3 and 5 have $|R^{+}_{G}|=\dim H$, so Corollary \ref{cor: criterion} implies that there are no non trivial parabolic subgroups that give MFSs. We apply the criterion also to no.~7, where $|R^{+}_{G}|=20$ and where the maximal parabolic subgroups are of dimension 15, 16 and 17.

No.~8 has $|R^{+}_{G}|=12$ while the maximal parabolic subgroups of $\G_{2}$ are of dimension $9$. Nos.~10 and 12 are discussed in \cite{Heckman van Pruijssen}. This leaves no.~9, for which we use Proposition \ref{prop: fiber reduction}. There is one candidate for a parabolic subgroup $P$,  it is determined by $\{\alpha_{1}\}^{c}$ and has dimension 16. An irreducible representation of $\Spin_{7}$ of highest weight $k\omega_{1}$ restricted to $H_{*}=\SL_{3}$ decomposes multiplicity free. To see this, note that $H_{*}=\SL_{3}\subset\G_{2}\subset\Spin_{7}=H$. The restriction $\pi^{H}_{k\omega_{1}}|_{\G_{2}}$ remains irreducible and hence $\pi^{H}_{k\omega_{1}}|_{H_{*}}$ decomposes multiplicity free.
\end{proof}

The proof for the items in Table \ref{table2} is postponed to Section \ref{section: spectrum} (below Remark \ref{rk: Sp bottom}) because we use spectra of induced representations for which we have to introduce some notation first. Alternatively, one could prove the Theorem for the items in Table \ref{table2} using Proposition \ref{prop: fiber reduction}. However, for this one needs to know the embeddings $H_{*}\subset H$ which are in general not standard.


\section{The spectrum}\label{section: spectrum}

Given a multiplicity free system $(G,H,P)$ and an irreducible representation $\pi^{H}_{\mu}$ where $\mu$ is a character of $P$, it is natural to ask which irreducible representations of $G$ contain $\pi^{H}_{\mu}$ upon restriction to $H$. The highest weights of such representations are collected in the set
$$P^{+}_{G}(\mu)=\{\lambda\in P^{+}_{G}:m^{G,H}_{\lambda}(\mu)=1\},$$
baptized as the $\mu$-well. The $0$-well $P^{+}_{G}(0)$ is a monoid generated by a finite number weights, the \textit{fundamental spherical weights}. For $G$ simple the spherical weights are listed in \cite{Kramer}. According to the Borel--Weil Theorem, $\lambda\in P^{+}_{G}(\mu)$ implies $\lambda+\sigma\in P^{+}_{G}(\mu)$ for any $\sigma\in P^{+}_{G}(0)$, because the projection $V^{G}_{\lambda}\otimes V_{\sigma}^{G}\to V_{\lambda+\sigma}^{G}$ is $G$-equivariant and non-trivial. In fact, $m^{G,H}_{\lambda+k\sigma}(\mu)\le m^{G,H}_{\lambda+(k+1)\sigma}(\mu)$ for $\mu\in P_{H}^{+}$ general and all $k\in\bbN$. The weights $\lambda$ for which $m^{G,H}_{\lambda+k\sigma}(\mu)=1$ are determined as follows.  

Consider the parabolic subgroup $Q=\{g\in G:gBH=BH\}$ of $G$ with Levi decomposition $Q_{u}\ltimes L$. The local structure theory implies that there exist an open $B$-stable affine subset $X^{o}\subset X$ and a closed $L$-stable affine subvariety $Z\subset X^{o}$ such that $L$ acts on $Z$ with stabilizers isomorphic to some $L_{0}\subset L$. The horospherical type of $G/H$ is $S=L_{0}\times Q_{u}$. The quotient $A=L/L_{0}$ is a torus because $L'\subset L_{0}$. Moreover, $L_{0}=H_{*}$, the generic isotropy group for $H$ acting on $\lah^{\perp}\subset\lag^{\vee}$, see e.g.~\cite[Thm.~9.1]{Timashev}.

The Levi subgroup $L$ acts irreducibly on $V=(V_{\lambda}^{G})^{Q_{u}}$, and thus so does $L_{0}=H_{*}$, say with highest weight $\lambda_{*}\in P^{+}_{H_{*}}$. As $HB\subset G$ is dense, and $B$ leaves $V$ invariant, any non-zero vector $v\in V$ is $H$-cyclic. It follows that $m^{H,H_{*}}_{\mu}(\lambda_{*})\ge m^{G,H}_{\lambda}(\mu)$ for all $\mu\in P^{+}_{H}$. Define
$$P^{+}_{H_{*}}(\mu)=\{\nu\in P^{+}_{H^{*}}:m^{H,H_{*}}_{\mu}(\nu)\ge1\}.$$
Then the association $\lambda\mapsto\lambda_{*}$ is a mapping $P^{+}_{G}(\mu)\to P^{+}_{H_{*}}(\mu)$. The next result by Kitagawa \cite{Kitagawa} implies that this map is surjective.

\begin{theorem}\label{thm: stability}
Let $\lambda\in P^{+}_{G}$ and let $\sigma\in P^{+}_{G}(0)$ be general, i.e.~$f\in\bbC[G]^{(B)\times H}_{\sigma}$ cuts out the complement of $HB$ in $G$. Then, with the notation from above, $m^{H,H_{*}}_{\mu}(\lambda_{*})=m^{G,H}_{\lambda+k\sigma}(\mu)$ for $m\gg0$.
\end{theorem}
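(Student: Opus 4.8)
The plan is to exhibit, for $\lambda \in P^+_G$ and a sufficiently general $\sigma \in P^+_G(0)$, an explicit isomorphism between the $\mu$-isotypic multiplicity space $\Hom_H(V^H_\mu, V^G_{\lambda + k\sigma})$ for $k \gg 0$ and the $\lambda_*$-isotypic space $\Hom_{H_*}(V^{H_*}_{\lambda_*}, V^H_\mu)$. We already know from the discussion preceding the theorem that $m^{G,H}_{\lambda + k\sigma}(\mu) \le m^{H,H_*}_\mu(\lambda_*)$ and that this sequence is non-decreasing in $k$; it therefore stabilizes, and the content of the statement is that the limiting value equals $m^{H,H_*}_\mu(\lambda_*)$. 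So the real work is producing the reverse inequality for $k \gg 0$.

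First I would set up the geometry carefully. With $X = G/H$, $Q = \{g : gBH = BH\}$, $X^o = Q_u L Z$ the Knop local-structure slice, and $S = L_0 \ltimes Q_u = H_* \ltimes Q_u$ the horospherical type, I would pass to the associated graded: the function $f \in \bbC[G]^{(B)\times H}_\sigma$ cutting out $G \setminus HB$ degenerates the $G$-module structure on $\bigoplus_k V^G_{\lambda + k\sigma}$ — equivalently, on sections of line bundles on $X$ twisted by powers of $\calL_\sigma$ — into a module over the horospherical contraction $G/S$. Concretely, multiplication by $f$ gives an injection $V^G_\lambda \hookrightarrow V^G_{\lambda+\sigma}$ of $H$-modules (not $G$-modules), so $\bigcup_k V^G_{\lambda+k\sigma}$ (suitably normalized) carries a limiting action in which the subgroup $H$ acts but the "$Q_u$-direction'' has been trivialized, leaving effectively an $H_*$-action on $V = (V^G_\lambda)^{Q_u}$ with highest weight $\lambda_*$. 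The key input is that sections over the open affine $X^o$, where $\calL_\sigma$ is trivialized by $f$, already see all of $\Hom_{H_*}(V^{H_*}_{\lambda_*}, V^H_\mu)$, because $\bbC[X^o] \cong \bbC[Q_u] \otimes \bbC[L/L_0]$ as $L$-modules and $L_0 = H_*$.

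Next I would make the limiting argument precise. One wants: every $H$-equivariant map $\phi : V^H_\mu \to V^G_\lambda \otimes (\text{germ of sections of } \calL_\sigma^{\otimes k} \text{ near the base point})$ — i.e. every element of $\Hom_{H_*}(V^{H_*}_{\lambda_*}, V^H_\mu)^\vee$ realized on the slice $Z$ — extends to a global $H$-map into $V^G_{\lambda + k\sigma}$ once $k$ is large enough to clear the poles along $X \setminus X^o$. This is exactly the kind of statement proved by Kitagawa \cite{Kitagawa}: a local section on the big cell extends to a global section of a high enough twist, and the bound on $k$ depends only on $\lambda$ and the divisor $\{f = 0\}$, not on $\mu$. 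Combining the extension with the a priori upper bound gives $m^{G,H}_{\lambda + k\sigma}(\mu) = m^{H,H_*}_\mu(\lambda_*)$ for $k \gg 0$, which is the claim. I would phrase the final step as: the two non-decreasing, bounded sequences $k \mapsto \dim\Hom_H(V^H_\mu, V^G_{\lambda+k\sigma})$ and its stable value agree because the natural restriction-to-$X^o$ maps are injective and eventually surjective.

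The main obstacle is the surjectivity/extension step — showing the slice-local $H$-maps globalize with a uniform bound on $k$. The inequality $\le$ is formal (cyclicity of nonzero vectors in $V$ under the dense orbit $HB$), but the matching lower bound requires genuinely controlling the filtration of $\bbC[G]_\sigma$-multiplication and invoking the vanishing/extension result of \cite{Kitagawa}; without that, one only gets stabilization to \emph{some} value $\le m^{H,H_*}_\mu(\lambda_*)$. A secondary subtlety is verifying that the $L$-module $(V^G_\lambda)^{Q_u}$ really restricts irreducibly to $H_* = L_0$ with the stated highest weight $\lambda_*$ — this uses $L' \subset L_0$ so that $L/L_0$ is a torus and the $L$-irreducible $V$ stays $L_0$-irreducible — which I would state as a lemma citing \cite[Thm.~9.1]{Timashev} before the main argument.
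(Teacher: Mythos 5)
Your overall strategy --- localize along the $B\times H$-semiinvariant divisor cut out by $f$, identify the resulting direct limit $\varinjlim_k V^G_{\lambda+k\sigma}$ with an $H$-module induced from $H_*$, and combine this with the formal upper bound $m^{G,H}_{\lambda+k\sigma}(\mu)\le m^{H,H_*}_{\mu}(\lambda_*)$ --- is the same as the paper's. But there is a genuine gap at the decisive step. You reduce the lower bound to the assertion that ``a local section on the big cell extends to a global section of a high enough twist'' and justify it by saying this is exactly the kind of statement proved by Kitagawa. Since the theorem you are proving \emph{is} Kitagawa's stability theorem, that appeal is circular: the extension statement is the content of the result, not an available input. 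The paper closes this step without any extension or vanishing theorem: it realizes $V^G_{\lambda+k\sigma}=\Gamma(G/P,\,p_*L\otimes L_{-\sigma^\vee}^{k})$ for the coherent sheaf $\widetilde{M}=p_*L$ on $G/P$ (with $L=L_{-\lambda^\vee}$ on $G/B$) and then uses the standard identity $\Gamma(D_+(v_H),\widetilde{M})=\varinjlim_k V^G_{\lambda+k\sigma}$ for sections of a coherent sheaf over the complement of a divisor; surjectivity ``for $k\gg0$'' is then automatic because each fixed $H$-isotypic component of the limit module already lives at some finite stage. (Your stronger claim that the required $k$ depends only on $\lambda$ and the divisor and not on $\mu$ is neither needed for the statement nor established by this argument.)

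A second, smaller gap concerns the identification of the localized module as an $H$-module. You argue that the sections over the open cell ``see all of $\Hom_{H_*}(V^{H_*}_{\lambda_*},V^H_\mu)$'' because $\bbC[X^{o}]\cong\bbC[Q_u]\otimes\bbC[L/L_0]$, but that isomorphism only records the $L$- (or $Q$-) module structure of the slice, whereas the multiplicity $m^{G,H}_{\lambda+k\sigma}(\mu)$ is read off from the $H$-module structure of the limit. One must pass through an $H$-equivariant identification such as $HB/B\cong H/(B\cap H)$ with $B\cap H$ a Borel subgroup of $H_*$, so that induction in stages gives $\Gamma(HB/B,L)=\ind^{H}_{B\cap H}(-\lambda^\vee)=\ind^{H}_{H_*}\pi^{H_*}_{\lambda_*}$; this is exactly where the paper's first proof, and Brion's variant via $HB/U\cong H\times^{H_*}(L/U_L)$, do the real work. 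Your closing observation that $(V^G_\lambda)^{Q_u}$ remains irreducible under $L_0=H_*$ because $L'\subset L_0$ is correct and is indeed a needed ingredient.
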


In the case where $(G,H)$ is a symmetric pair, this result was proved by Wallach \cite[Cor.~8.5.15]{Wallach}. Inspired by this we found an algebro-geometric proof.

\begin{proof}[Proof of Theorem \ref{thm: stability}]
Existence of $\sigma$ is guaranteed by the following observation: $HB\subset G$ is affine, being the pre-image of $BH/H=B/(B\cap H)$ of the affine map $G\to G/H$, hence $BH\subset G$ is of codimension one and thus cut out by a regular function $s'$, unique up to multiplication with an invertible element in $\bbC[G]$. Such an element must be a scalar multiple of a character of $G$ and it follows that $s'$ is an eigenfunction of $B\times H$. Hence $s'\in\bbC[G]^{(B)\times(H)}_{(\sigma',\chi')}$. There exists also $0\ne s''\in\bbC[G]^{(B)\times(H)}_{(\sigma'^{\vee},\chi^{-1})}$, whence $s's''\in\bbC[G]^{(B)\times H}$ cuts out the complement of $BH$ in $G$.

Let $L=L_{-\lambda^{\vee}}$ be the line bundle over $G/B$ associated to $\lambda$, i.e.~$\Gamma(G/B,L)\cong V^{G}_{\lambda}$ as $G$-modules. Replace $\sigma$ by a large multiple so that $\lambda-\sigma\not\in P^{+}_{G}$. Define
$$A=\bigoplus_{n\in\bbN}\Gamma(G/P,L_{-\sigma^{\vee}}^{n}),\quad M=\bigoplus_{n\in\bbN}\Gamma(G/B,L\otimes p^{*}L_{-\sigma^{\vee}}^{n}).$$
In view of the isomorphism $\Gamma(G/B,p^{*}L_{-\sigma^{\vee}})=\Gamma(G/P,L_{-\sigma^{\vee}})$ as $G$-modules, the graded $A$-module $M$ defines a coherent sheaf $\widetilde{M}$ over $G/P$. Moreover,
$$\Gamma_{*}(p_{*}L)=\bigoplus_{n\in\bbN}\Gamma(G/P,p_{*}L\otimes L_{-\sigma^{\vee}}^{n})=M$$
by the product formula, whence $p_{*}L=\widetilde{M}$. Let $v_{H}\in V^{G}_{\sigma}$ be a non-trivial $H$-fixed vector and let $D_{+}(v_{H})\subset G/P$ denote the complement of the $H$-invariant divisor on $G/P$. According to the previous observations there is an isomorphism $\Gamma(D_{+}(v_{H}),\widetilde{M})=\Gamma(HB/B,L)$ because $p^{-1}(D_{+}(v_{H}))=HB/B$. Induction in stages implies
$$\Gamma(HB/B,L)=\ind^{H}_{B_{*}}(-\lambda^{\vee})|_{B_{*}}=\ind^{H}_{H_{*}}\pi_{\lambda_{*}}^{H_{*}}.$$
On the other hand, the space $\Gamma(D_{+}(v_{H}),\widetilde{M})$ is the direct limit of the system $V^{G}_{\lambda+k\sigma}\to V^{G}_{\lambda+(k+1)\sigma}$ of $H$-representations, given by the $H$-equivariant map $V^{G}_{\lambda+k\sigma}\to V^{G}_{\lambda+(k+1)\sigma}:v\mapsto \pr(v\otimes v^{H})$ where $v^{H}\in V^{G}_{\sigma}$ is non-zero and $H$-fixed and where $\pr$ is the Cartan projection. This implies that any $H$-isotypical type in $\ind^{H}_{H_{*}}\pi^{H_{*}}_{\lambda_{*}}$ occurs in $V^{G}_{\lambda+k\sigma}$ for $k\gg0$.
\end{proof}

Later, Michel Brion showed me a proof of Theorem \ref{thm: stability}, using invariant theory, which goes along the following lines. Let $B\subset G$ and $B_{H}\subset H$ be Borel subgroups of $G$ and $H$, respectively, for which $BH\subset G$ is open and let $B=TU$ and $B_{H}=T_{H}U_{U}$ be Levi decompositions. Let $U_{H}^{-}$ be opposite to $B_{H}$ in $H$. The torus $T\times T_{H}$ acts on $G/U\times H/U_{H}^{-}$ from the right, and $H$ acts diagonally on this space on the left. We have
$$\bbC[G/U\times H/U_{H}^{-}]^{H\times (T)\times (T_{H})}_{(\lambda,\mu)}=\Hom_{H}(V^{G}_{\lambda},V^{H}_{\mu}).$$
Let $0\ne s\in\bbC[G]^{H\times(B)}_{\sigma}$. Then $\bbC[HB/U]_{(s)}=\bbC[G][s^{-1}]$ is naturally graded. Viewing $s$ as an element as a regular function on $G/U\times H/U_{H}^{-}$ yields, after localizing in the ideal $(s)\subset\bbC[G/U\times H/U_{H}^{-}]$,
$$\bbC[HB/U\times H/U_{H}^{-}]^{H\times (T)\times (T_{H})}_{(\lambda,\mu)}=\bigcup_{n\ge0}\Hom_{H}(V^{G}_{\lambda+n\sigma},V^{H}_{\mu}).$$
On the other hand, $HB/U=HLU/U=HL/U_{L}$, where $U_{L}=B\cap L$.  This can be seen as follows: According to the local structure theory, the multiplication map $R_{u}(Q)\times LH\to HQ$ is an isomorphism. This holds also true for $R_{u}(Q)\times U_{L}\to U$. We have
$$HQ/U=(HL\times R_{u}(Q))/(R_{u}(Q)\times U_{L})=HL\times^{U_{L}}(R_{u}(Q)/R_{u}(Q)),$$
which is isomorphic to $HL/U_{L}$. In turn, $HL/U_{L}=H\times^{H_{*}}(L/U_{L})$, from which we deduce
$$\bbC[HB/U\times H/U_{H}^{-}]^{H\times (T)\times (T_{H})}_{(\lambda,\mu)}=\Hom_{H_{*}}(V^{L}_{\lambda},V^{H}_{\mu}).$$ Since $V^{L}_{\lambda}$ is an irreducible $H_{*}$-module of highest weight $\lambda_{*}=\lambda|_{B_{H_{*}}}$, we find
$$\Hom_{H_{*}}(V_{\lambda_{*}}^{H_{*}},V_{\mu}^{H})= \bigcup_{n\ge0}\Hom_{H}(V^{G}_{\lambda+n\sigma},V^{H}_{\mu}),$$
from which the result follows.

\begin{remark}
It follows that $P^{+}_{G}(\mu)$ has \textit{finite behavior} given by $P^{+}_{H_{*}}(\mu)$ and \textit{asymptotic behavior} given by $P^{+}_{G}(0)$. For general $\mu\in P^{+}_{H}$ there need not be a minimal element in $P^{+}_{G}(\mu)$ over any $\tau\in P^{+}_{H_{*}}(\mu)$, see e.g.~\cite[Remark 3.1]{Camporesi}. In the multiplicity free case that is studied in this paper it may still be the case that such a minimal element exists. In this case the $\mu$-well would have a bottom $B(\mu)\subset P^{+}_{G}(\mu)$ such that $P^{+}_{G}(\mu)=B(\mu)+P^{+}_{G}(0)$. This is the case for the MFSs of rank one \cite{Heckman van Pruijssen} and for the three examples higher rank that are discussed in the next section.
\end{remark}

To finish the proof of Theorem \ref{thm: classification} we need the following result.

\begin{lemma}\label{lemma: Sp spectrum}
Let $(G,H)=(\Sp_{2n},\Sp_{2n-2}\times\Sp_{2})$. Then
$$P^{+}_{G}(\omega_{i}+\ell\omega_{n})\cap P^{+}_{G}(\omega_{i}+(\ell+2)\omega_{n})\ne\emptyset.$$
\end{lemma}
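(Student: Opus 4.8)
The plan is to sidestep the branching rule for $\Sp_{2n}\downarrow\Sp_{2n-2}\times\Sp_{2}$ and argue via the stability theorem. By the surjectivity of the map $\lambda\mapsto\lambda_{*}$, $P^{+}_{G}(\mu)\to P^{+}_{H_{*}}(\mu)$ (a consequence of Theorem~\ref{thm: stability}), it suffices to produce a single weight $\tau\in P^{+}_{H_{*}}$ lying in $P^{+}_{H_{*}}(\omega_{i}+\ell\omega_{n})\cap P^{+}_{H_{*}}(\omega_{i}+(\ell+2)\omega_{n})$: choosing $\lambda\in P^{+}_{G}$ with $\lambda_{*}=\tau$ and a general $\sigma\in P^{+}_{G}(0)$, Theorem~\ref{thm: stability} applied to $\mu=\omega_{i}+\ell\omega_{n}$ and to $\mu=\omega_{i}+(\ell+2)\omega_{n}$ gives $m^{G,H}_{\lambda+k\sigma}(\mu)=m^{H,H_{*}}_{\mu}(\tau)=1$ for $k\gg0$, so $\lambda+k\sigma$ lies in the intersection.

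First I would fix notation: $(G,H)=(\Sp_{2n},\Sp_{2n-2}\times\Sp_{2})$ is a symmetric pair, $\pi^{H}_{\omega_{i}+\ell\omega_{n}}\cong\pi^{\Sp_{2n-2}}_{\omega_{i}}\boxtimes\mathrm{Sym}^{\ell}\bbC^{2}$ with $1\le i\le n-1$ and $\ell\ge0$, and I would identify $H_{*}$. As an $H$-module, $\lah^{\perp}\cong\bbC^{2n-2}\otimes\bbC^{2}=\Hom(\bbC^{2n-2},\bbC^{2})$; a generic element is a surjection whose kernel $W$ is a nondegenerate $(2n-4)$-dimensional subspace, and one checks (consistently with the dimension and rank formulas (\ref{formulaA}), (\ref{formulaB})) that its stabilizer is $H_{*}\cong\Sp_{2n-4}\times\Sp_{2}$, with $\Sp_{2n-4}=\Sp(W)$ inside the $\Sp_{2n-2}$-factor and the $\Sp_{2}$ embedded diagonally into $\Sp(W^{\perp})\times\Sp_{2}$. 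The key point is that under this embedding the $\mathrm{Sym}^{\ell}\bbC^{2}$-factor restricts to $\mathrm{Sym}^{\ell}\bbC^{2}$ of the $\Sp_{2}$-factor of $H_{*}$, unmixed with the $\Sp_{2n-4}$-part.

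Next I would use the elementary branching rule $\pi^{\Sp_{2m}}_{\omega_{j}}|_{\Sp_{2}\times\Sp_{2m-2}}=1\boxtimes\pi^{\Sp_{2m-2}}_{\omega_{j}}\oplus\mathrm{Sym}^{1}\bbC^{2}\boxtimes\pi^{\Sp_{2m-2}}_{\omega_{j-1}}\oplus 1\boxtimes\pi^{\Sp_{2m-2}}_{\omega_{j-2}}$ (obtained by peeling off primitive parts of the exterior powers $\wedge^{j}\bbC^{2m}$; here $\pi_{\omega_{0}}$ is trivial and $\pi_{\omega_{r}}=0$ for $r<0$ or $r$ above the rank). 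Tensoring with $\mathrm{Sym}^{\ell}\bbC^{2}\boxtimes 1$ gives
\[\big(\pi^{\Sp_{2n-2}}_{\omega_{i}}\boxtimes\mathrm{Sym}^{\ell}\bbC^{2}\big)\big|_{H_{*}}=\mathrm{Sym}^{\ell}\bbC^{2}\boxtimes\pi^{\Sp_{2n-4}}_{\omega_{i}}\ \oplus\ \big(\mathrm{Sym}^{1}\!\otimes\mathrm{Sym}^{\ell}\big)\bbC^{2}\boxtimes\pi^{\Sp_{2n-4}}_{\omega_{i-1}}\ \oplus\ \mathrm{Sym}^{\ell}\bbC^{2}\boxtimes\pi^{\Sp_{2n-4}}_{\omega_{i-2}}.\]
Since the $\Sp_{2n-4}$-labels $\omega_{i},\omega_{i-1},\omega_{i-2}$ are pairwise distinct (when nonzero) and $\mathrm{Sym}^{1}\otimes\mathrm{Sym}^{\ell}$ contains $\mathrm{Sym}^{\ell+1}$ with multiplicity one, the type $\tau:=\mathrm{Sym}^{\ell+1}\bbC^{2}\boxtimes\pi^{\Sp_{2n-4}}_{\omega_{i-1}}$ occurs here with multiplicity one; the same decomposition with $\ell$ replaced by $\ell+2$, using $\mathrm{Sym}^{1}\otimes\mathrm{Sym}^{\ell+2}\supset\mathrm{Sym}^{\ell+1}$, shows $\tau$ also occurs with multiplicity one in $\big(\pi^{\Sp_{2n-2}}_{\omega_{i}}\boxtimes\mathrm{Sym}^{\ell+2}\bbC^{2}\big)|_{H_{*}}$. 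Hence $\tau\in P^{+}_{H_{*}}(\omega_{i}+\ell\omega_{n})\cap P^{+}_{H_{*}}(\omega_{i}+(\ell+2)\omega_{n})$, and the first paragraph finishes the argument.

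I expect the only genuinely delicate point to be the identification of $H_{*}$ together with the way its $\Sp_{2}$-factor sits inside $H$: this is exactly what makes the $\mathrm{Sym}^{\ell}\bbC^{2}$-factor survive cleanly in the restriction and produces a common $H_{*}$-type whose labels do not depend on $\ell$ (apart from the shift by one). Everything after that—the exterior-power branching rule and the Clebsch--Gordan count—is routine. (Alternatively one could try to exhibit an explicit $\lambda$, e.g.\ $\lambda=(\ell+1)\omega_{1}+\omega_{i+1}$, but checking it for all $n$ requires the full $\Sp_{2n}\downarrow\Sp_{2n-2}\times\Sp_{2}$ rule, which the stability approach avoids.)
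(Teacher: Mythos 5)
Your argument is correct and follows essentially the same route as the paper: identify $H_{*}\cong\Sp_{2n-4}\times\Sp_{2}$ with its $\Sp_{2}$-factor embedded diagonally, branch through the intermediate subgroup $\Sp_{2}\times\Sp_{2n-4}\times\Sp_{2}$, use Clebsch--Gordan for the diagonal $\Sp_{2}$ to exhibit a common $H_{*}$-type for the two values of $\ell$, and conclude via Theorem~\ref{thm: stability}. The only difference is that you spell out the exterior-power branching rule explicitly, where the paper leaves the shape of the $\Sp_{2}\times\Sp_{2n-4}$-constituents implicit.
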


\begin{proof}
Consider the subgroup $H_{1}=\Sp_{2}\times\Sp_{2n-4}\times\Sp_{2}\subset H$ whose embedding is standard, i.e.~the embedding of the first two factors is similar to $H\subset G$ and the third factor is equality. The generic isotropy group $H_{*}=\Sp_{2n-4}\times\Sp_{2}$ is embedded in $H_{1}$ by the diagonal embedding of the factor $\Sp_{2}$. The restriction of $\pi^{H}_{\omega_{i}}$ to $H_{1}$ contains irreducible $\Sp_{2}\times\Sp_{2n-4}$-representations of the shape $\pi^{\Sp_{2}}_{\ell'}\otimes \pi^{\Sp_{2n-4}}_{\eta}$. Restricting further to $H_{*}$ boils down to decomposing $\pi^{\Sp_{2}}_{\ell'}\otimes\pi^{\Sp_{2}}_{\ell(+2)}$ to the diagonal. It follows that $P^{+}_{H_{*}}(\omega_{i}+\ell\omega_{n})\cap P^{+}_{H_{*}}(\omega_{i}+(\ell+2)\omega_{n})\ne\emptyset$ and application of Theorem \ref{thm: stability} yields the result.
\end{proof}

\begin{remark}\label{rk: Sp bottom}
We could have proved Lemma \ref{lemma: Sp spectrum} using the descriptions of the $\mu$-wells from \cite{Heckman van Pruijssen}. For later reference we recall this description for a spacial case: $P^{+}_{G}(\ell\omega_{n})=\ell\varpi_{1}+\bbN\varpi_{2}$, where $(G,H)$ is as in Lemma \ref{lemma: Sp spectrum}.
\end{remark}

\begin{proof}[Proof for the items in Table \ref{table2}] For nos.~1 and 4 we have $|R^{+}_{G}|=\dim H$, so there are no non-trivial MFSs. For the other pairs we reason as follows: either the subgroups $H$ are contained in a Levi subgroup of $G$ (nos.~1, 3 and 5) or they are \textit{very reductive of height 2 or 3}, i.e.~all intermediate groups $H\subset G_{1}\subset G$ are reductive and the longest chains are of length $2$ (nos.~2,4,6,7) and $3$ (nos.~6 and 8), see \cite{Brion1}. The height of $H\subset G$ in no.~6 depends on the parameters. 

We look for parabolic subgroups $P\subset H$ such that the induction from $P$ to $G$ is multiplicity free. In all cases we induce first to an intermediate group $G_{1}$. This induction must be multiplicity free, and the spectrum of the induction cannot contain different representations which, after inducing to $G$, contain the same $G$-representation. The first induction always contains an induction of an $\SL_{2}$-representation to $\SL_{2}\times\SL_{2}$. We invoke Lemma \ref{lemma: Sp spectrum} to exclude all non-trivial parabolic subgroups contained in the factors $\Sp_{2m}$ of $H$. Non-trivial parabolic subgroups $P$ of $\SL_{m-2}$ or $\GL_{m-2}$ in nos.~3 and 5 are excluded by a similar argument. The representations of this group after induction of any $\SL_{2}$-representation is too general for being induced multiplicity free, according to Table \ref{table1} or \cite{He et al}: indeed, the only multiplicity free induced representations come from maximal parabolic subgroups. It follows that the non-trivial parabolic subgroups that we are looking for are precisely those of the factors $\SL_{2},\Sp_{2}$ (not the ones with a parameter).
\end{proof}

\begin{remark}
Part two of the proof of Theorem \ref{thm: classification} indicates how to calculate the $\mu$-wells, one just has to keep track of the wells on different stages. The induction in stages is mostly that of a group-like case or that of a rank one case and both are known. However, for nos.~2, 3 and 5 we need knowledge of the $\mu$-wells for spherical pairs $(G,H)$ of Table \ref{table1}.\end{remark}

\begin{example}
The semi-groups $P^{+}_{G}(0)$ need not be free, as nos.~2 and 7 in Table \ref{table1} show. A similar thing happens for no.~8 in Table \ref{table2}. Inducing the trivial representation of $H$ to $G$ via the intermediate subgroup $C_{m-1}(C_{1}C_{1})(C_{1}C_{1})C_{n-1}$ gives modules of weight $\ell_{1}\varpi_{1}+k_{1}\varpi_{2}+b+\ell_{2}\varpi_{1}'+k_{2}\varpi_{2}'$, where $b\in P^{+}_{\Sp_{4}}(\ell_{1}w_{1}+\ell_{2}w_{2})$, the well for inducing from $C_{1}C_{1}$ to $C_{2}$. It follows that $P^{+}_{G}(0)$ is indeed of rank 6, as expected by (\ref{formulaB}), but it is not free. 
\end{example}


\section{Examples}\label{section: examples}

The spherical pairs $(\SO_{9},\Spin_{7})$ and $(H\times H,H)$ with $H=\SL_{n+1}$ and $(\Sp_{2m}\times\Sp_{2n},\Sp_{2m-2}\times\Sp_{2}\times\Sp_{2n-2})$ all admit multiplicity free induction. The first example occurs in Table \ref{table1}, the third in Table \ref{table2} and the second is symmetric; it is the only group-like symmetric pair that admits non-trivial multiplicity free induction (i.e.~other than inducing a one-dimensional representation), see e.g.~\cite[Cor.~4.9]{He et al}. In this section we calculate the spectra $P^{+}_{G}(\mu)$ for the MFSs associated to these spherical pairs. It turns out that the spectra possess a partial ordering that allows for a definition of orthogonal polynomials, see Section \ref{section: OP}.


\subsection{The case $(\SO_{9},\Spin_{7})$ }

Let $G=\SO(9)$, $H\cong\Spin(7)$ and let the embedding $H\subset G$ be given by $\Spin_{7}\subset \SO_{8}\subset\SO_{9}$. The restriction to $\SO_{8}$ of an irreducible $G$-type of highest weight $\lambda=a_{1}\eps_{1}+a_{2}\eps_{2}+a_{3}\eps_{3}+a_{4}\eps_{4}\in P^{+}_{G}$ decomposes into irreducible $\SO_{8}$-modules of highest weight $\nu=b_{1}\eps_{1}+b_{2}\eps_{2}+b_{3}\eps_{3}+b_{4}\eps_{4}$ with
\begin{eqnarray}\label{ineq: B4D4}
a_{1}\ge b_{1}\ge a_{2}\ge b_{2}\ge a_{3}\ge b_{3}\ge a_{4}\ge |b_{4}|.
\end{eqnarray}
Branching from $\SO_{8}$ to $\Spin_{7}$ goes as follows. Let $\tau$ be the outer automorphism of $\SO_{8}$ that interchanges the roots $\alpha_{1}$ and $\alpha_{3}$. Then the highest weights of the irreducible $\Spin_{7}$-representation that occur in the restriction of the irreducible $\SO_{8}$-representation of highest weight $\nu$ are those that occur in $\tau(\nu)$ for the standard embedding $\laso_{7}\subset\laso_{8}$. The same branching rules are obtained if $\tau$ is replaced by any other automorphism that interchanges only $\alpha_{1}\leftrightarrow\alpha_{3}$ or $\alpha_{1}\leftrightarrow\alpha_{4}$.

Restricting the irreducible $\SO_{8}$ representation of highest weight $\nu$ to $\Spin_{7}$ contains a summand of highest weight $\mu=k\eps_{1}$ if and only if  
\begin{eqnarray}
b_{1}+b_{3}\ge k\ge b_{1}+b_{4}\ge0, \label{ineq: D4B3 twist1}\\
b_{2}=b_{1}+b_{3}+b_{4},\label{ineq: D4B3 twist2}\\
b_{2}+b_{3}+b_{4}\le b_{1}.\label{ineq: D4B3 twist3}
\end{eqnarray} 
Indeed, in the basis $\{\eps_{1},\eps_{2},\eps_{3},\eps_{4}\}$ the automorphism $\tau$ is given by the matrix
$$\frac{1}{2}\left(\begin{array}{rrrr}
1&1&1&-1\\
1&1&-1&1\\
1&-1&1&1\\
-1&1&1&1\\
\end{array}\right)$$
and the inequalies follow readily from the classical branching rules. The inequalities (\ref{ineq: D4B3 twist1},\ref{ineq: D4B3 twist2},\ref{ineq: D4B3 twist3}) with $b_{3}\ge|b_{4}|$ imply that $b_{4}=-b_{3}$ and $b_{2}=b_{1}$. Together with (\ref{ineq: B4D4}) one sees that the $\SO_{8}$-modules in the restriction of an irreducible $G$-module of highest weight $\lambda$ are of highest weight $(a_2,a_2,a_4,-a_{4})$. The irreducible $\Spin_{7}$-representation of highest weight $k\eps_{1}$ occurs as a summand in the restriction of the $\SO_{8}$-representation of highest weight $\tau(a_2,a_2,a_4,-a_{4})=(a_{2}+a_{4},a_{2}-a_{4},0,0)$ if and only if $a_{2}+a_{4}\ge k\ge a_{2}-a_{4}$.

\begin{theorem}\label{thm: well}
Let $\mu=k\eps_{1}$. Then $P^{+}_{G}(\mu)=\{a_{1}\eps_{1}+a_{2}\eps_{2}+a_{3}\eps_{3}+a_{4}\eps_{4}\in P^{+}_{G}:a_{2}+a_{4}\ge k\ge a_{2}-a_{4}\}$. Define $B(\mu)=\{s(\varpi_{2}-\varpi_{4})+t(\varpi_{3}-\varpi_{4})+k\varpi_{4}:s,t\in\bbN, s+t\le k\}$. Then the map
$$\lambda:\bbN\varpi_{1}+\bbN\varpi_{4}+B(\mu)\to P^{+}_{G}(\mu)$$
is an isomorphism of sets.
\end{theorem}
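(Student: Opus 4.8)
For the first assertion I would argue that almost nothing is left to do beyond the branching computation carried out just above the statement. There it is shown that the interlacing condition (\ref{ineq: B4D4}) together with (\ref{ineq: D4B3 twist1})--(\ref{ineq: D4B3 twist3}) singles out $\nu=(a_{2},a_{2},a_{4},-a_{4})$ as the only $\SO_{8}$-constituent of $V^{G}_{\lambda}$ through which the $\Spin_{7}$-type $k\eps_{1}$ can enter, that this $\nu$ always occurs in $V^{G}_{\lambda}|_{\SO_{8}}$, that $\tau(\nu)=(a_{2}+a_{4},a_{2}-a_{4},0,0)$, and that $\pi^{H}_{k\eps_{1}}$ occurs in $\tau(\nu)|_{\SO_{7}}$ precisely when $a_{2}+a_{4}\ge k\ge a_{2}-a_{4}$. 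The only point I would add is that the branchings $\SO_{9}\downarrow\SO_{8}$ and $\SO_{8}\downarrow\Spin_{7}$ are multiplicity free (equivalently, $(\SO_{9},\Spin_{7},P)$ is the MFS no.~8 of Table \ref{table1}, so $m^{G,H}_{\lambda}(\mu)\le1$ for every $\lambda$); hence ``$\pi^{H}_{k\eps_{1}}$ occurs'' is the same as ``$m^{G,H}_{\lambda}(\mu)=1$'', which yields the stated description of $P^{+}_{G}(\mu)$.

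For the second assertion the plan is to write down the bijection and its inverse explicitly in $\eps$-coordinates. I would pass from fundamental to $\eps$-coordinates via $\varpi_{1}=\eps_{1}$, $\varpi_{2}=\eps_{1}+\eps_{2}$, $\varpi_{3}=\eps_{1}+\eps_{2}+\eps_{3}$, $\varpi_{4}=\tfrac12(\eps_{1}+\eps_{2}+\eps_{3}+\eps_{4})$, so that $P^{+}_{G}$ is the set of $a_{1}\eps_{1}+\cdots+a_{4}\eps_{4}$ with $a_{1}\ge a_{2}\ge a_{3}\ge a_{4}\ge0$ all integers or all half-integers. A direct computation shows that for $m,n,s,t\in\bbN$ with $s+t\le k$,
$$m\varpi_{1}+n\varpi_{4}+s(\varpi_{2}-\varpi_{4})+t(\varpi_{3}-\varpi_{4})+k\varpi_{4}=a_{1}\eps_{1}+a_{2}\eps_{2}+a_{3}\eps_{3}+a_{4}\eps_{4},$$
where $a_{1}=m+\tfrac{s+t+k+n}{2}$, $a_{2}=\tfrac{s+t+k+n}{2}$, $a_{3}=\tfrac{k+t-s+n}{2}$, $a_{4}=\tfrac{k-s-t+n}{2}$; in particular $a_{1}-a_{2}=m$, $a_{2}-a_{3}=s$, $a_{3}-a_{4}=t$, $a_{2}+a_{4}=k+n$ and $a_{2}-a_{4}=s+t$. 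The first three identities together with $a_{4}=\tfrac{k-s-t+n}{2}\ge0$ show that the right-hand side lies in $P^{+}_{G}$, and the last two show that it satisfies $a_{2}+a_{4}\ge k\ge a_{2}-a_{4}$, hence lies in $P^{+}_{G}(\mu)$. This proves $\bbN\varpi_{1}+\bbN\varpi_{4}+B(\mu)\subseteq P^{+}_{G}(\mu)$.

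For the reverse inclusion and for injectivity I would simply invert these formulas: given $\lambda=a_{1}\eps_{1}+\cdots+a_{4}\eps_{4}\in P^{+}_{G}(\mu)$, put $m=a_{1}-a_{2}$, $s=a_{2}-a_{3}$, $t=a_{3}-a_{4}$ and $n=a_{2}+a_{4}-k$. Dominance of $\lambda$ gives $m,s,t\ge0$; the inequality $a_{2}+a_{4}\ge k$ gives $n\ge0$, and $n\in\bbZ$ because $a_{2}$ and $a_{4}$ are simultaneously integers or half-integers, so $a_{2}+a_{4}\in\bbZ$, while $k\in\bbN$; finally $s+t=a_{2}-a_{4}\le k$ by the second defining inequality of $P^{+}_{G}(\mu)$, so $(s,t)$ lies in the triangle occurring in the definition of $B(\mu)$. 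Substituting back recovers $\lambda$, and starting instead from a quadruple $(m,n,s,t)$ and forming $\lambda$ recovers the quadruple; thus the two assignments are mutually inverse and identify $P^{+}_{G}(\mu)$ with $\bbN\varpi_{1}+\bbN\varpi_{4}+B(\mu)$ (the sum being direct), which is the asserted isomorphism of sets. The case $k=0$ gives in passing $P^{+}_{G}(0)=\bbN\varpi_{1}+\bbN\varpi_{4}$, consistently with Kr\"amer's tables and with (\ref{formulaB}).

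I do not anticipate a genuine obstacle: once the branching rules of this subsection are in hand, the argument is bookkeeping. The two points that ask for a little care are the half-integer shift when translating between $\eps$-coordinates and fundamental weights of $B_{4}$, and the verification that $n=a_{2}+a_{4}-k$ is a non-negative integer --- this is exactly where one uses both the ``band'' inequality $a_{2}+a_{4}\ge k$ and the fact that $a_{2}$ and $a_{4}$ lie in the same coset of $\bbZ$.
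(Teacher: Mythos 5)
Your proposal is correct and follows essentially the same route as the paper: the first assertion is read off from the preceding branching computation, and the bijection is obtained by splitting off $(a_{1}-a_{2})\varpi_{1}+(a_{2}+a_{4}-k)\varpi_{4}$ and identifying the remainder with the point of $B(\mu)$ given by $s=a_{2}-a_{3}$, $t=a_{3}-a_{4}$, exactly as in the paper's map $b$. Your version merely carries out the coordinate bookkeeping (and the integrality of $n=a_{2}+a_{4}-k$) more explicitly than the paper does.
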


\begin{proof}
The description of $P^{+}_{G}(\mu)$ is clear from the discussion above. The fundamental spherical weights for $(G,K)$ are $\varpi_{1}$ and $\varpi_{4}$. It is clear that $\lambda\in P^{+}_{G}(\mu)$ implies $\lambda+u\varpi_{1}+v\varpi_{4}\in P^{+}_{G}(\mu)$ for $u,v\in\bbN$. For $\lambda\in P^{+}_{G}(\mu)$, define $b(\lambda)=\lambda-(a_{1}-a_{2})\varpi_{1}-(a_{2}+a_{4}-k)\varpi_{4}=(a_{2}-a_{3})\varpi_{2}+(a_{3}-a_{4})\varpi_{3}+(a_{4}-a_{2}+k)\varpi_{4}$. Then $b(\lambda)\in P^{+}_{G}(\mu)$ and for $\sigma\in P^{+}_{G}(0)$, $b(\lambda)-\sigma\not\in P^{+}_{G}(\mu)$. Write $s=a_{2}-a_{3}$ and $t=a_{3}-a_{4}$. Then $s,t\in\bbN$ and $s+t=a_{2}-a_{4}\le k$. Hence the map $b:P^{+}_{G}(\mu)\to B(\mu)$ is surjective. It follows that $P^{+}_{G}(\mu)\to \bbN\varpi_{1}+\bbN\varpi_{4}+B(\mu):\lambda\to b(\lambda)+(a_{1}-a_{2})\varpi_{1}+(a_{2}+a_{4}-k)\varpi_{4}$ is an isomorphism.
\end{proof}

\begin{definition}\label{def: degree B4B3}
Define the degree function $|\cdot|:P^{+}_{G}(\mu)\to\bbN$ by $|\lambda+u\varpi_{1}+v\varpi_{4}|=|\lambda|+u+v$ for $u,v\in\bbN$ and $\min(|\lambda+\bbZ\varpi_{1}+\bbZ\varpi_{4}\cap P^{+}_{G}(\mu)|)=0$.
\end{definition}

The bottom is given by $B(\mu)=\{\lambda\in P^{+}_{G}(\mu):|\lambda|=0\}$. We introduce the partial $\preceq_{\mu}$ ordering on $P^{+}_{G}(\mu)$:
$$\lambda'\preceq_{\mu}\lambda\Leftrightarrow|\lambda'|<|\lambda|\mbox{ or }|\lambda'|=|\lambda|\mbox{ and }s'+t'\le s+t,$$
where $\lambda=\lambda(n;s,t)$ and $\lambda'=\lambda(n';s',t')$. One checks that for a weight $\xi_{i}$ of $\pi_{\varpi_{i}}^{G}$, $\lambda+\xi_{i}\preceq_{\mu}+\varpi_{i}$, where $i=1,4$.


\subsection{The case $(H\times H,\Delta(H))$}

Let $H=\SL_{n+1}$ let $B\subset H$ be the Borel subgroup consisting of upper triangular matrices and let $T\subset B$ be the torus consisting of the diagonal elements. Let $P\subset H$ be the parabolic subgroup corresponding to the set $\{\alpha_{1}\}^{c}$. Let $G=H\times H$ and let $\Delta(H)$ be the diagonal, which we identify with by $H$. Then $(G,H,P)$ is a MFS with $(G,H)$ a symmetric pair with involution $\theta(x,y)=(y,x)$. The maximal anisotropic subgroup $A$ is the image of $T\to G:t\mapsto (t,t^{-1})$ and $Z_{H}(A)=\Delta(T)$ which is identified with $T$. Let $B^{-}$ denote the Borel subgroup opposite to $B$ with $B\cap B^{-}=T$. The Borel subgroup $B\times B^{-}$ determines a notion of positivity for roots and weights of $G$. The fundamental weights of $H$ are denoted by $\omega_{i}$. Those of $G$ are subsequently given by $(\omega_{i},0)$ and $(0,-\omega_{i})$. The spherical weights are $(\omega_{i},-\omega_{i})$. Fix $k\in\bbN$. Then
$$P_{T}^{+}(k\omega_{1})=\left\{\sum_{i=1}^{n}k_{i}\alpha_{i}-k\omega_{n}:k\ge k_{n}\ge\cdots\ge k_{1}\ge0\right\}.$$
To see this, lift $\pi_{k\omega_{1}}^{H}$ to a representation of $\GL_{n+1}$ and write its basis in Gelfand-Cetlin tableaux. These consist of zeros everywhere, except for the item in the first entry of each row. These are the $k_{i}$'s in the description. The weights are easily read from these tableaux and they are the ones given in the definition. For example, the highest weight is given by the tuple $(k,k,\ldots,k)$; indeed, $\sum_{i}\alpha_{i}=\omega_{1}+\omega_{n}$. 
Clearly, $(0,-k\omega_{i})\in P^{+}_{G}(k\omega_{i})$. Define $\widetilde{\alpha_{i}}=(\omega_{i}-\omega_{i+1},\omega_{i}-\omega_{i-1})$ (where $\omega_{0}=\omega_{n+1}=0$) and
$$B(k\omega_{1})=\left\{\sum_{i=1}^{n}k_{i}\widetilde{\alpha_{i}}+(0,-k\omega_{n}):k\ge k_{n}\ge\cdots\ge k_{1}\ge0\right\}.$$

\begin{proposition}\label{prop: wellHH}
$P^{+}_{G}(k\omega_{1})=P^{+}_{G}(0)+B(k\omega_{1})$.
\end{proposition}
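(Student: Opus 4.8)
The plan is to identify $P^{+}_{G}(k\omega_{1})$ with the set of weights occurring in a Pieri decomposition, and then to carry out an explicit change of variables matching this set with $P^{+}_{G}(0)+B(k\omega_{1})$.

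First I would reduce to a tensor product multiplicity. Writing a dominant weight of $G=H\times H$ for the positivity coming from $B\times B^{-}$ as a pair $(a,-b)$ with $a,b\in P^{+}_{H}$, the irreducible $G$-module $\pi^{G}_{(a,-b)}$ has underlying space $V^{H}_{a}\otimes V^{H}_{b^{\vee}}$ with the two tensor factors acted on separately (the second factor, of $B^{-}$-highest weight $-b$, is $V^{H}_{b^{\vee}}$, since $w_{0}(b^{\vee})=-b$); hence it restricts to $\Delta(H)$ as the tensor product $H$-module $V^{H}_{a}\otimes V^{H}_{b^{\vee}}$. Therefore
$$m^{G,H}_{(a,-b)}(k\omega_{1})=\dim\Hom_{H}\!\big(V^{H}_{a}\otimes V^{H}_{b^{\vee}},V^{H}_{k\omega_{1}}\big)=\dim\Hom_{H}\!\big(V^{H}_{a},\,V^{H}_{k\omega_{1}}\otimes V^{H}_{b}\big),$$
i.e. the multiplicity of $V^{H}_{a}$ in $V^{H}_{k\omega_{1}}\otimes V^{H}_{b}=S^{k}(\bbC^{n+1})\otimes V^{H}_{b}$ (this is automatically $\le 1$, either because $(G,H,P)$ is an MFS or directly from Pieri's rule below).

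Next I would apply Pieri's rule after lifting $V^{H}_{b}$ and $V^{H}_{k\omega_{1}}$ to polynomial $\GL_{n+1}$-modules. Let $\beta=(\beta_{1}\ge\cdots\ge\beta_{n+1}=0)$ be the partition representative of $b$, and for a partition $\gamma=(\gamma_{1}\ge\cdots\ge\gamma_{n+1}\ge0)$ write $\bar\gamma=\sum_{i=1}^{n}(\gamma_{i}-\gamma_{i+1})\omega_{i}$ for the associated $\SL_{n+1}$-weight. By Pieri's rule, $S^{k}(\bbC^{n+1})\otimes V^{H}_{b}$ is the multiplicity-free direct sum of the $V^{H}_{\bar\alpha}$ over partitions $\alpha$ such that $\alpha/\beta$ is a horizontal strip of size $k$, i.e.
$$\alpha_{1}\ge\beta_{1}\ge\alpha_{2}\ge\beta_{2}\ge\cdots\ge\beta_{n}\ge\alpha_{n+1}\ge0,\qquad \sum_{i=1}^{n+1}(\alpha_{i}-\beta_{i})=k.$$
Since for fixed $\beta$ a partition $\alpha$ is determined by $\bar\alpha$ together with $|\alpha|=|\beta|+k$, this shows that $P^{+}_{G}(k\omega_{1})$ is exactly the set of all $(\bar\alpha,-\bar\beta)$ with $\alpha/\beta$ a horizontal $k$-strip and $\beta_{n+1}=0$, the assignment $(\alpha,\beta)\mapsto(\bar\alpha,-\bar\beta)$ being a bijection onto $P^{+}_{G}(k\omega_{1})$.

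Finally comes the only step with content, the change of variables. Given such a pair $(\alpha,\beta)$ put $k_{0}=0$, $k_{j}=\sum_{i=1}^{j}(\alpha_{i}-\beta_{i})$ for $1\le j\le n$, and $c_{j}=\beta_{j}-\alpha_{j+1}$ for $1\le j\le n$. The horizontal-strip inequalities and the size condition translate exactly into $0=k_{0}\le k_{1}\le\cdots\le k_{n}\le k$ and $c_{j}\in\bbN$; conversely any such data reconstruct $\beta$ and $\alpha$ (with $\beta_{n+1}=0$, $\alpha_{n+1}=k-k_{n}$), so this is a bijection onto the index set of $P^{+}_{G}(0)+B(k\omega_{1})$. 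A direct computation, using $\alpha_{j}-\alpha_{j+1}=(k_{j}-k_{j-1})+c_{j}$ for $j\le n$ and $\beta_{j}-\beta_{j+1}=(k_{j+1}-k_{j})+c_{j}$ for $j<n$ with $\beta_{n}=(k-k_{n})+c_{n}$, then gives
$$\big(\bar\alpha,-\bar\beta\big)=\sum_{j=1}^{n}c_{j}\,(\omega_{j},-\omega_{j})\;+\;\Big(\sum_{j=1}^{n}k_{j}\,\widetilde{\alpha_{j}}+(0,-k\omega_{n})\Big),$$
whose right-hand side runs precisely over $P^{+}_{G}(0)+B(k\omega_{1})$ as $(c_{\bullet},k_{\bullet})$ varies. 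This proves $P^{+}_{G}(k\omega_{1})=P^{+}_{G}(0)+B(k\omega_{1})$, and, since the expression on the right is unique, that $B(k\omega_{1})$ is exactly the bottom of the $k\omega_{1}$-well. I expect the only real obstacle to be bookkeeping: keeping the $B^{-}$-convention on the second factor and the $\SL_{n+1}$-versus-$\GL_{n+1}$ normalization consistent so that Pieri's rule genuinely computes $m^{G,H}_{(a,-b)}(k\omega_{1})$, and then checking that the change of variables respects both parametrizations.
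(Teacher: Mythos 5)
Your proof is correct; I verified the change of variables ($\alpha_{j}-\alpha_{j+1}=(k_{j}-k_{j-1})+c_{j}$, $\beta_{j}-\beta_{j+1}=(k_{j+1}-k_{j})+c_{j}$ for $j<n$, $\beta_{n}=(k-k_{n})+c_{n}$), and it does set up a bijection between horizontal $k$-strips $\alpha/\beta$ with $\beta_{n+1}=0$ and the index set $\bbN^{n}\times\{0\le k_{1}\le\cdots\le k_{n}\le k\}$ of $P^{+}_{G}(0)+B(k\omega_{1})$. The route is genuinely different from the paper's, and in a useful way. The paper also reduces to a tensor-product multiplicity for $\SL_{n+1}$, but it proves only two facts: that each element of $B(k\omega_{1})$ lies in the well, by exhibiting an explicit Littlewood--Richardson filling of the skew shape $\lambda/\mu$ with content $\nu$ (here $(\mu,\nu)$ is the pair attached to a bottom element and $\lambda$ is the Young diagram of $k\omega_{1}$), and that subtracting a nonzero spherical weight from a bottom element leaves the well (no reverse lattice word survives removing columns); the remaining inclusion $P^{+}_{G}(k\omega_{1})\subseteq P^{+}_{G}(0)+B(k\omega_{1})$ is left to the general structure results of Section 3 (the projection onto $P^{+}_{T}(k\omega_{1})$ together with the stability theorem). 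You instead transpose the Hom space so that the one-row factor $V^{H}_{k\omega_{1}}$ sits inside the tensor product, which replaces the full Littlewood--Richardson rule by Pieri's rule; the interlacing inequalities then describe the entire well in closed form, and your single bijection delivers both inclusions, the freeness of the decomposition, and the minimality of the bottom simultaneously. What the paper's approach buys is that its two-step scheme (bottom lies in the well; bottom minus a spherical weight does not) is the template that generalizes to the other examples in Section 4, where no Pieri-type shortcut is available; what your approach buys is a shorter, self-contained and fully explicit proof in this particular case. The only point to make explicit when splicing this in is the standing convention that a $B\times B^{-}$-dominant weight $(a,-b)$ labels the $G$-module $V^{H}_{a}\otimes V^{H}_{b^{\vee}}$, which you have stated correctly.
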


\begin{proof}
Write $\lambda=k\omega_{i}$. First note that $B(\lambda)$ projects onto $P_{T}^{+}(\lambda)$ after the identification $\omega_{i}\leftrightarrow\frac{1}{2}(\omega_{i},\omega_{i})$. It suffices to show (1) $B(\lambda)\subset P^{+}_{G}(\lambda)$ and (2) $\tau\in B(\lambda), \sigma\in P^{+}_{G}(0)\backslash\{0\}$ implies $\tau-\sigma\not\in P^{+}_{G}(\lambda)$.

(1) An element $\tau\in B(\lambda)$ corresponds to an element $(id\times(-w_{0})^{*})(\tau)=(\mu,\nu)$ in the Weyl chamber that is determined by the Borel subgroup $B\times B$ of $G$ ($w_{0}$ is the longest Weyl group element). The problem is now reduced to the question whether tensor product decomposition of $\pi^{H}_{\mu}\otimes\pi^{H}_{\nu}$ contains $\pi^{H}_{\lambda}$. The element $\tau$ is determined by $(k,k_{n},\ldots,k_{1})$. Upon identifying $\mu$ and $\nu$ with their Young tableaux, one has
$$\mu=(k_{n},k_{n}-k_{1},\ldots,k_{n}-k_{n-1},0),\quad\nu=(k-k_{1},k_{n}-k_{1},\ldots,k_{2}-k_{1},0).$$
Identify $\lambda$ with the Young tableau $\lambda=(k_{n}-k_{1}+k,k_{n}-k_{1},\ldots,k_{n}-k_{1})$. Then $|\mu|+|\nu|=|\lambda|$ and $\mu\subset\lambda$. The skew tableau $\lambda/\mu$ can be filled with natural numbers as follows: The number of columns of $\lambda/\mu$ is $k_{n}-2k_{1}+k$. The top boxes of the first $k-k_{1}$ columns get a 1. In the rows 3 to $n+1$ the number of empty boxes is given by $(k_{2}-k_{1},k_{3}-k_{1},\ldots,k_{n}-k_{1})$. These are precisely the entries $2,\ldots,n$ of $\nu$ in reversed order. Fill the top boxes of the $k_{n}-k_{1}$ columns with 2s. There are $k_{n}-k_{2}$ columns left with empty top box -- fill it with 3s. Continuing in this way, the skew tableau $\lambda/\mu$ gets filled with $k-k_{1}$ 1s, $k_{n}-k_{1}$ 2s and so forth, and on top of every box containing a $3,4,\ldots,n$ there is box containing a $2,3,\ldots,n-1$. This implies that $\lambda/\mu$ is of weight $\nu$ and the row word is a reversed lattice word. This proves (1).

For (2), note that subtracting $\sigma$ amounts to removing $r$ columns in the Young tableaux of $\mu$ and $\nu$, giving $\mu',\nu'$. If the tensor product $\pi^{H}_{\mu'}\otimes\pi^{H}_{\nu'}$ contains $\pi^{H}_{\lambda}$ upon restriction, then the Young tableau of $\lambda$ must be adapted accordingly into $\lambda'$, by removing $r$ columns of $n+1$ boxes. The new number of 1s is $\nu'_{1}=k-k_{1}-r$, whereas the first row of $\lambda'/\mu'$ has $k-k_{1}$ boxes. Hence the skew tableau $\lambda'/\mu'$ of weight $\nu'$ cannot give a reverse lattice word.
\end{proof}

As in the previous example, the spectrum $P^{+}_{G}(k\omega_{1})$ is equipped with a degree function $|\cdot|:P^{+}_{G}(k\omega_{i})\to\bbN$ that counts the number of steps from the bottom $B(k\omega_{i})$ to the given point, see Definition \ref{def: degree B4B3}. In the present example there are $n$ fundamental spherical weights which we denote by $\sigma_{i}=(\omega_{i},-\omega_{i})$.

Let $n=2$ so $(G,H)$ is of rank two, and fix $\mu=k\omega_{1}$. Let $\lambda:\bbN\sigma_{1}+\bbN\sigma_{2}+B(\mu)\to P^{+}_{G}(\mu)$ be the isomorphism of Proposition \ref{prop: wellHH}. We introduce the partial $\preceq_{\mu}$ ordering on $P^{+}_{G}(\mu)$:
$$\lambda'\preceq_{\mu}\lambda\Leftrightarrow|\lambda'|<|\lambda|\mbox{ or }|\lambda'|=|\lambda|\mbox{ and }s'+t'\le s+t,$$
where $\lambda=\lambda(n;s,t)$ and $\lambda'=\lambda(n';s',t')$. One checks that for a weight $\xi_{i}$ of $\pi_{\sigma_{i}}^{G}$, $\lambda+\xi_{i}\preceq_{\mu}+\sigma_{i}$, where $i=1,2$. Similar calculations for $n>2$ soon become cumbersome. It is clear that a conceptual understanding of these phenomena is desired.


\subsection{The case $(\Sp_{2m}\times\Sp_{2n},\Sp_{2m-2}\times\Sp_{2}\times\Sp_{2n-2})$}

Let $F=\Sp_{2m-2}\times\Sp_{2}\times\Sp_{2}\times\Sp_{2n-2}$. The trivial representation of $H$ occurs in a restricted $G$-representation $\pi^{G}_{\lambda}$ if and only if it occurs in the restriction to $H$ of one of the components of the decomposition $\pi^{G}_{\lambda}|_{F}$. Inducing in stages via $F$ and using the inverted branching rules for the cases $(\Sp_{2}\times\Sp_{2},\Sp_{2})$ and $(\Sp_{2m},\Sp_{2m-2}\times\Sp_{2})$ (see Remark \ref{rk: Sp bottom}) yields
$$P^{+}_{G}(0)=\bbN(\varpi_{1},\varpi_{1}')\oplus\bbN(\varpi_{2},0)\oplus\bbN(0,\varpi_{2}'),$$
where $(\varpi_{i},0),(0,\varpi_{i}')$ denote the fundamental weights for $G$. Restricting the $H$-representation of highest weight $\mu=(0,\ell\omega,0)$ to $H_{*}$ is really the restriction of an irreducible $\Sp_{2}$-representation to its maximal torus and it decomposes into $\ell+1$ one-dimensional weight spaces. Certainly, $\{(\ell_{1}\varpi_{1},\ell_{2}\varpi_{1}'):\ell_{1}+\ell_{2}=\ell+2\bbN\}\subset P^{+}_{G}(\mu)$ and in fact, $B(\mu)=\{(\ell_{1}\varpi_{1},\ell_{2}\varpi_{1}'):\ell_{1}+\ell_{2}=\ell\}$. 
Indeed, subtracting $(\varpi_{2},0)$ or $(0,\varpi_{2})$ gives an element outside $P^{+}_{G}$. Subtracting $r(\varpi_{1},\varpi_{1}')$ yields $(p_{1}\varpi_{1},p_{2}\varpi_{1}')$ with $p_{1}+p_{2}=\ell-2r$, whence $(p_{1}\varpi_{1},p_{2}\varpi_{1}')\not\in P^{+}_{G}(\mu)$. It follows that there is an isomorphism $\lambda:P^{+}_{G}(\mu)=P^{+}_{G}(0)+B(\mu)$. The $\mu$-well has dimension $3$ or $4$.

We introduce the partial $\preceq_{\mu}$ ordering on $P^{+}_{G}(\mu)$:
$$\lambda'\preceq_{\mu}\lambda\Leftrightarrow|\lambda'|<|\lambda|\mbox{ or }|\lambda'|=|\lambda|\mbox{ and }n_{1}'\le n_{1},$$
where $\lambda=\lambda(n_{1},n_{2},n_{3};s)$ and $\lambda'=\lambda(n_{1},n_{2},n_{3};s')$. This partial ordering is of a different nature a the those of the previous examples. Nonetheless, one checks that for a weight $\xi_{i}$ of $\pi_{\sigma_{i}}^{G}$, $\lambda+\xi_{i}\preceq_{\mu}\lambda+\sigma_{i}$, where $i=1,2,3$, where
$$\sigma_{1}=(\varpi_{1},\varpi_{1}'),\sigma_{2}=(\varpi_{2},0)\mbox{ and }\sigma_{3}=(0,\varpi_{2}').$$

\begin{remark}
In these three cases the bottom $B(\mu)$ depends affine linearly on $P^{+}_{H_{*}}$. The bottoms of the MFSs of rank one are piece-wise affine linear sets \cite{Heckman van Pruijssen}. In either case, $B(\mu)$ is the translate of a piecewise linear set $B(P)$ that depends only on the parabolic subgroup $P$. The weights of the fundamental spherical representations lie in $B(P)$ translated over the fundamental spherical weights. It would be interesting to understand this in the generality of our classification, perhaps using convexity theorems from symplectic geometry. 

Related to the symplectic point of view, it is interesting whether all the multiplicity free branchings from $H$ to $H_{*}$ can be described as lattice points of a convex polytope. The branching of $\G_{2}$ to $\SO_{4}$ does not have this property: lattice points are missing on the boundary of the convex polytope that contains all $\SO_{4}$-types in the restriction of an irreducible $\G_{2}$-module of highest weight $k\varpi_{2}$ (the shorter fundamental weight). However, this is not a counterexample, for this restriction is not multiplicity free.
\end{remark}


\section{Orthogonal polynomials}\label{section: OP}

Let $(G,H,P)$ be a MFS from Section \ref{section: examples} or with $(G,H)$ spherical pair of rank one. In the latter case there is a theory that provides families of matrix valued polynomials with nice properties: they are orthogonal, they satisfy a three term recurrence relation and they are simultaneous eigenfunctions of a commutative algebra of differential operators, see \cite{Heckman van Pruijssen, van Pruijssen Roman}.

Let $G_{0}$ and $H_{0}$ denote the compact Lie groups whose Lie algebras are compact forms of $\lag,\lah$. Let $\mu\in P^{+}_{H}$ be the weight of a character of $P$ and consider the space of spherical functions of type $\mu$,
$$E^{\mu}=(R(G_{0})\otimes\End(V^{H}_{\mu}))^{H_{0}\times H_{0}},$$
where $R(G_{0})$ is the convolution algebra of matrix coefficients on $G_{0}$.  Let $\lambda\in P_{G}^{+}(\mu)$ and let $\pi^{G_{0}}_{\lambda}:G_{0}\to\GL(V_{\lambda})$ denote the corresponding representation (the upper index that indicates the group is omitted from now on). Then $V_{\lambda}=V_{\mu}\oplus V_{\mu}^{\perp}$ and we denote by $b:V_{\mu}\to V_{\lambda}$ a unitary $H_{0}$-equivariant embedding and by $b^{*}:V_{\lambda}\to V_{\mu}$ its Hermitian adjoint. The elementary spherical function of type $\mu$ associated to $\lambda\in P_{G}^{+}(\mu)$ is defined by
\[\Phi^{\mu}_{\lambda}:G\to\End(V_{\mu}):g\mapsto b^{*}\circ\pi_{\lambda}(g)\circ b\] 
and it is contained in $E^{\mu}$. The space $E^{\mu}$ is equipped with a sesqui-linear form that is linear in the second variable,
\[\langle\Phi_{1},\Phi_{2}\rangle_{\mu,G}=\int_{G}\tr\left(\Phi_{1}(g)^{*}\Phi_{2}(g)\right)dg\]
with $dg$ the normalized Haar measure on $G_{0}$. Schur orthogonality and the Peter--Weyl Theorem imply: 
\begin{itemize}
\item The pairing $\langle\cdot,\cdot\rangle_{\mu,G}:E^{\mu}\times E^{\mu}\to\bbC$ is a Hermitian inner product and
$$\langle\Phi_{\lambda}^{\mu},\Phi_{\lambda'}^{\mu}\rangle_{\mu,G}=c_{\lambda}\delta_{\lambda,\lambda'},\quad c_{\lambda}=\dim(\mu)^{2}/\dim(\lambda).$$
\item $\{\Phi^{\mu}_{\lambda}:\lambda\in P^{+}_{G}(\mu)\}$ is an orthogonal basis of $E^{\mu}$.
\end{itemize}
Let $U(\lag)$ be the universal enveloping algebra of $\lag$ and let $U(\lag)^{H}$ denote the algebra of differential operators that are invariant under the pull back of right $H$-multiplication. Let $I(\mu)\subset U(\lah)$ be the kernel of the representation $U(\lah)\to\End(V_{\mu})$ and define
$$\bbD(\mu):=U(\lag)^{H}/\left(U(\lag)^{H}\cap U(\lag)I(\mu)\right).$$
The irreducible representations of $\bbD(\mu)$ correspond to irreducible representations of $\lag$ whose restriction to $\lah$ have a subrepresentation of highest weight $\mu$, see e.g.~\cite[Th{\'e}or{\`e}me 9.1.12]{Dixmier}. The algebra $\bbD(\mu)$ is commutative, because all its finite dimensional representations are one-dimensional. Moreover, the elementary spherical functions are simultaneous eigenfunctions for the algebra $\bbD(\mu)$.
 
Equip $P^{+}_{G}(\mu)=P^{+}_{G}(0)+B(\mu)$ with the partial ordering $\preceq_{\mu}$ as defined in Section \ref{section: examples} (for the three examples) or \cite{Heckman van Pruijssen} (for the rank one cases). Denote the string of fundamental zonal spherical functions by $\phi=(\phi_{1},\ldots,\phi_{r})$ and let $\sigma_{i}$ denote the corresponding fundamental spherical weights, i.e.~the $\sigma_{i}$ generate $P^{+}_{G}(0)$. Then $E^{0}=\bbC[\phi]$, i.e.~$E^{0}$ is a polynomial ring\footnote{It should be noted that this may not always be the case, as not all 0-wells are free.}. The product $\phi_{i}\Phi^{\mu}_{\lambda}$ can be expanded in elementary spherical functions of type $\mu$,
\begin{eqnarray}\label{eqn:recurrence el sf}
\phi_{i}\Phi_{\lambda}^{\mu}=\sum_{\lambda-\sigma_{i}\preceq_{\mu}\lambda'\preceq_{\mu}\lambda+\sigma_{i}}c_{\lambda,\lambda'}^{\mu}\Phi^{\mu}_{\lambda'}.
\end{eqnarray}
The fact that the sum may be taken over the indicated set follows the discussion in Section \ref{section: examples} (for the three examples) or \cite{Heckman van Pruijssen} (for the rank one cases).

Define the isomorphism $\lambda:\bbN^{r}\times B(\mu)\to P^{+}_{G}(\mu)$ by $\lambda(d,b)=\sum d_{i}\sigma_{i}+b$, where $d=(d_{1},\ldots,d_{r})$. The Borel-Weil Theorem implies that $c^{\mu}_{\lambda,\lambda+\sigma_{i}}\ne0$. It follows that the elementary spherical function $\Phi^{\mu}_{\lambda}$ can be expressed as a $E^0$-linear combination of the functions $\Phi^{\mu}_{b}$, with $b\in B(\mu)$.

\begin{definition}
\begin{itemize}
\item For $\lambda=\lambda(d,\nu')\in P_{G}^{+}(\mu)$ define $Q_{\lambda}(\phi)=(q^{\mu}_{\nu,\nu'}(\phi):\nu\in B(\mu))$ in $\bbC^{|B(\mu)|}[\phi]$ by
$$\Phi_{\lambda(d,\nu')}^{\mu}=\sum_{\nu\in B(\mu)}q^{\mu}_{\nu,\nu'}(\phi)\Phi_{\lambda(0,\nu)}^{\mu}.$$
\item For every multi-index $d\in\bbN^{r}$ define $Q_{d}(\phi)\in\End(\bbC^{|B(\mu)|})[\phi]$ as the matrix valued polynomial having the $Q_{\lambda(d,\nu')}(\phi)$ as columns ($\nu'\in B(\mu)$).
\end{itemize}
\end{definition}

\begin{theorem}\label{thm:mvmvop}
The matrix valued polynomial $Q_{d}$ is of total degree $|d|$ and the coefficient of degree $d$ is invertible.
\end{theorem}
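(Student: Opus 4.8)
The plan is to prove the two assertions --- that $Q_d$ has total degree $|d|=\sum_i d_i$ and that its leading coefficient is invertible --- by an induction on $|d|$, using the recurrence relation \eqref{eqn:recurrence el sf} to drive the induction and the partial ordering $\preceq_{\mu}$ to control the degrees of the entries.

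First I would unwind the definitions: by the displayed expansion, the column of $Q_d$ indexed by $\nu'$ records the coefficients $q^{\mu}_{\nu,\nu'}(\phi)$ expressing $\Phi^{\mu}_{\lambda(d,\nu')}$ in the $E^0$-basis $\{\Phi^{\mu}_{\lambda(0,\nu)}:\nu\in B(\mu)\}$ of the free $E^0$-module generated by the bottom. The base case $|d|=0$ is immediate: $Q_0$ is the identity matrix, of degree $0$, with invertible (identity) leading coefficient. For the inductive step, fix $d$ with $|d|\ge 1$, pick an index $i$ with $d_i\ge 1$, and write $d=d'+e_i$. Multiplying the identity $\Phi^{\mu}_{\lambda(d',\nu')}=\sum_{\nu}q^{\mu}_{\nu,\nu'}(\phi)\,\Phi^{\mu}_{\lambda(0,\nu)}$ by $\phi_i$ and expanding each $\phi_i\Phi^{\mu}_{\lambda(0,\nu)}$ via \eqref{eqn:recurrence el sf}, one sees that $\phi_i\,\Phi^{\mu}_{\lambda(d',\nu')}$ is an $E^0$-combination (with scalar coefficients absorbed into the polynomials) of the $\Phi^{\mu}_{\lambda(0,\eta)}$; on the other hand, again by \eqref{eqn:recurrence el sf} applied at $\lambda=\lambda(d',\nu')$, the product $\phi_i\,\Phi^{\mu}_{\lambda(d',\nu')}$ equals $\Phi^{\mu}_{\lambda(d,\nu')}$ plus a sum of $\Phi^{\mu}_{\lambda'}$ with $\lambda'\prec_{\mu}\lambda(d,\nu')$, i.e.\ either of strictly smaller degree $|d'|$ (hence governed by a $Q_{d''}$ with $|d''|<|d|$ by induction) or of the same degree $|d'|+1=|d|$ but with a strictly smaller value of the auxiliary statistic appearing in the definition of $\preceq_{\mu}$. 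Rearranging gives a recursion expressing $Q_d$ in terms of $Q_{d'}$ (times $\phi_i$, which raises total degree by exactly one since each $\phi_i$ has degree one), $Q_{d''}$ for $|d''|<|d|$, and the $Q_{d}$-columns attached to bottom-weights $\nu$ that are $\preceq_{\mu}$-below $\nu'$ at the same degree. The key point is that the Borel--Weil observation $c^{\mu}_{\lambda,\lambda+\sigma_i}\ne 0$ guarantees the term $\Phi^{\mu}_{\lambda(d,\nu')}$ genuinely appears, so no cancellation can lower the degree below $|d|$, and that all correction terms have strictly lower $\preceq_{\mu}$-rank, so a nested induction (outer on $|d|$, inner on the linear-order refinement of $\preceq_{\mu}$ restricted to $B(\mu)$) closes.

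From this recursion the degree bound $\deg Q_d\le |d|$ follows immediately, and equality together with invertibility of the top coefficient follows by tracking the degree-$|d|$ part: the leading coefficient of $Q_d$ is (up to a nonzero scalar coming from $c^{\mu}_{\lambda(d',\nu'),\lambda(d',\nu')+\sigma_i}$) an upper- or lower-triangular modification --- with respect to the linear refinement of $\preceq_{\mu}$ on $B(\mu)$ --- of the leading coefficient of $Q_{d'}$, which is invertible by the outer induction hypothesis. Since a triangular matrix with nonzero diagonal entries is invertible, and since the diagonal entries here are exactly these nonzero Clebsch--Gordan-type constants, the leading coefficient of $Q_d$ is invertible; in particular it is nonzero, so $\deg Q_d=|d|$ exactly.

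The main obstacle is bookkeeping rather than conceptual: one must verify carefully that \emph{every} correction term produced when expanding $\phi_i\Phi^{\mu}_{\lambda(0,\nu)}$ and $\phi_i\Phi^{\mu}_{\lambda(d',\nu')}$ is strictly below $\lambda(d,\nu')$ in $\preceq_{\mu}$ \emph{and} that, when it happens to have the full degree $|d|$, its bottom-component $\eta$ is strictly earlier than $\nu'$ in the chosen linear order on $B(\mu)$ --- this is precisely the content of the assertion verified case-by-case in Section~\ref{section: examples} that ``for a weight $\xi_i$ of $\pi^G_{\sigma_i}$ one has $\lambda+\xi_i\preceq_{\mu}\lambda+\sigma_i$.'' The slight awkwardness is that the three examples use three superficially different refinements of $\preceq_{\mu}$; the proof should therefore be phrased abstractly, assuming only that $\preceq_{\mu}$ refines the degree filtration, that the recurrence \eqref{eqn:recurrence el sf} holds with summation range controlled by $\preceq_{\mu}$, and that $c^{\mu}_{\lambda,\lambda+\sigma_i}\ne 0$, so that it applies uniformly (and, as noted in Section~\ref{section: OP}, also to the rank-one cases of \cite{Heckman van Pruijssen}). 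A minor technical caveat, already flagged by the footnote in Section~\ref{section: OP}, is that $E^0$ need not be a genuine polynomial ring when the $0$-well is not free; but the only property used is that the $\Phi^{\mu}_{\lambda(0,\nu)}$ form a free $E^0$-basis of $E^\mu$, which remains valid, so the argument is unaffected.
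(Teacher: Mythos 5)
Your proposal is correct and follows essentially the same route as the paper: the paper's proof simply records the matrix three-term recurrence $\phi_i Q_d=\sum Q_{d'}A_{i,d'}+\sum Q_{d'}B_{i,d'}+\sum Q_{d'}C_{i,d'}$ and observes that $A_{i,d+\delta_i}$ is upper triangular and invertible (by $c^{\mu}_{\lambda,\lambda+\sigma_i}\neq 0$ and the $\preceq_{\mu}$-control of the expansion \eqref{eqn:recurrence el sf}) while the remaining $A_{i,d'}$ are strictly upper triangular, leaving the induction on $|d|$ implicit. You have spelled out exactly that induction, including the triangularity bookkeeping over $B(\mu)$ that the paper compresses into one sentence.
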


\begin{proof}
There exist matrices $A_{i,d'}, B_{i,d'},C_{i,d'}\in\End(\bbC^{|B(\mu)|})$
for $i=1,\ldots,r$ and $d'\in\bbN^{r}$, such that
\begin{eqnarray}\label{eqn: rec coefs}
\phi_{i} Q_d(\phi)=\sum_{|d'|=|d|+1}Q_{d'}(\phi)A_{i,d'}+\sum_{|d'|=|d|}Q_{d'}(\phi)B_{i,d'}+\sum_{|d'|=|d|-1}Q_{d'}(\phi)C_{i,d'}.
\end{eqnarray}
Moreover, $A_{i,d+\delta_{i}}$ is upper triangular and invertible, and the other $A_{i,d'}$ are strictly upper triangular, as follows from (the discussion following) (\ref{eqn:recurrence el sf}). 
\end{proof}

Define $V^{\mu}:G\to\End(\bbC^{|B(\mu)|})$ by $V^{\mu}(g)_{\nu,\nu'}=\tr(\Phi_{\lambda(0,\nu)}(g)^*\Phi_{\lambda(0,\nu')}(g))$. Note that $V^{\mu}$ is $K$-biinvariant, hence it is of the form $V^{\mu}=W^{\mu}(\phi)$ with $W^{\mu}$ an element in $\End(\bbC^{|B(\mu)|})[\phi]$. The pairing
\begin{eqnarray}\label{eqn: pairing}
\End(\bbC^{|B(\mu)|})[\phi]\times\End(\bbC^{|B(\mu)|})[\phi]\to\End(\bbC^{|B(\mu)|}),\nonumber\\
\langle Q,Q'\rangle_{W^{\mu}}=\int_G Q(\phi(g))^*W^{\mu}(\phi)Q'(\phi(g))dg,
\end{eqnarray}
is non-degenerate and gives $\End(\bbC^{|B(\mu)|})[\phi]$ the structure of a right pre-Hilbert module over $\End(\bbC^{|B(\mu)|})$ (see e.g.~\cite{Lance} for definitions). Then $\{Q_{d}:d\in\bbN^{r}\}$ is a family of multi-variable matrix valued orthogonal polynomials in the following sense.

\begin{definition} Let $\bbM$ be a finite dimensional matrix algebra over $\bbC$, denote $x=(x_{1},\ldots,x_{r})$ and let $\langle\cdot,\cdot\rangle$ a non-degenerate $\bbM$-valued inner product that makes $\bbM[x]$ into a right pre-Hilbert module over $\bbM$. Let $\{Q_{d}:d\in\bbN^{r}\}\subset\bbM[x]$ be a family of polynomials such that (1) $Q_{d}$ is of total degree $|d|$, (2) for each multi-degree $d\in\bbN^{r}$, the coefficient of $x^{d}$ in $Q_{d}$ is invertible and (3) $\langle Q_{d},Q_{d'}\rangle=\delta_{d,d'}M_{d}$. Then $\{Q_{d}:d\in\bbN^{r}\}$ is called a family of multi-variable matrix valued orthogonal polynomials (MVMVOP). Such a family is called classical if there exists a differential operator $D:\bbM[x]\to\bbM[x]$ that has the polynomials $Q_{d}$ as simultaneous eigenfunctions.
\end{definition}

Starting with an $\bbM$-valued measurable function $W:\bbR^{r}\to\bbM$ and a subset $\Xi\subset\bbR^{r}$ such that $W(x)>0$  on $\Xi$ almost everywhere and
$$\forall d\in\bbN^{r}:\,\int_{\Xi}W(x)x^{d}dx\in\bbM\,\mbox{ (finite moments)}$$
one can produce a family of MVMVOPs with respect to the pairing
$$\langle Q',Q\rangle_{W}=\int_{\Xi}Q'(x)W(x)Q(x)^{*}$$
by application of the Gram-Schmidt procedure. The existence of examples of classical families of MVMVOPs is a priori not clear. However, existence follows from our construction and the fact that the $Q_{d}$ are simultaneous eigenfunctions for the elements in $\bbD(\mu)$.

 If there is a polar decomposition $G_{0}=H_{0}A_{0}H_{0}$ for the pair $(G_{0},H_{0})$ then we may replace the integration in (\ref{eqn: pairing}) by an integration over $A_{0}$. This is possible if $(G,H)$ is symmetric or spherical of rank one.

Summing up, the indicated MFSs give rise to families of orthogonal polynomials. The families $(Q_{d}:d\in\bbN^{r})$ are so called multi-variable matrix valued orthogonal polynomials and the orthogonality is matrix valued. One could also study the families $(Q_{\lambda(d,\nu)}:d\in\bbN^{r},\nu\in B(\nu))$ of multi-variable vector valued orthogonal polynomials. The matrix weight remains the same but the pairing is now scalar valued.

If there is a polar decomposition $G_{0}=H_{0}A_{0}H_{0}$ for the pair $(G_{0},H_{0})$, then the matrix valued polynomials play a role in the harmonic analysis of sections of (specific) vector bundles over $G_{0}/H_{0}$, just as ordinary Jacobi polynomials (with geometric parameters) do for analyzing $K_{0}$-invariant functions on a symmetric space $G_{0}/K_{0}$. Moreover, after taking radial parts of the differential operators in $\bbD(\mu)$ and subsequently change the variables $x=\phi$, we obtain an interpretation of the algebra $\bbD(\mu)$ as an algebra differential operators whose coefficients are polynomials. This could shine a new light on the understanding of the algebra $\bbD(\mu)$, see also \cite[Conj.~10.3]{Lepowsky}.


\section{Outlook}\label{section: structure}

Our aim is to reduce the number of variables of the families of polynomials and in particular, to make the pairing $\langle\cdot,\cdot\rangle_{W}$ more explicit, as an integration over an $r$-dimensional compact domain. We have already seen that the polynomials are really polynomials in the fundamental spherical functions $\phi_{i}$ and that this also holds for the matrix weight. This implies that the functions $Q_{d}$ and $W^{\mu}$ are $H_{0}$-biinvariant. If the spherical pair is symmetric, then there is a polar decomposition $G_{0}=H_{0}A_{0}H_{0}$ and this also holds for the spherical pairs of rank one. We show that there is a polar decomposition on the level of the algebraic groups and we indicate the difficulty of passing to a polar decomposition for the compact real forms.

The local structure theory implies that $G/S$ ($S$ is the horospherical type of $H\subset G$, see also Section \ref{section: spectrum}) admits an open orbit for the action $H\times A$. We can also understand this using representation theory.

\begin{lemma}
The group $H\times A$ acts on $X_{0}=G/S$ via left and right multiplication which we denote by $\phi:H\times A\to X_{0},\phi_{0}(h,a)=haS/S$. This action admits an open orbit.
\end{lemma}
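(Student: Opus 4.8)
The plan is to show that the open $B$-orbit in $X = G/H$ — which exists because $(G,H)$ is spherical — gives rise, via the local structure theory recalled in Section \ref{section: spectrum}, to an open $H \times A$-orbit in $X_0 = G/S$. The key point is the relationship between $X$ and $X_0$: writing $Q = \{g \in G : gBH = BH\}$ with Levi decomposition $Q = Q_u \rtimes L$ and $S = L_0 \times Q_u$ with $L_0 = H_*$, there is a natural $G$-equivariant map $q: X_0 = G/S \to G/Q$, and the local structure theory provides a $Q$-stable (indeed $L$-stable via a slice $Z$) open piece whose relation to the open $B$-orbit on $X$ we exploit. Concretely, $HB \subset G$ is open and dense, and $HB/H \cong B/(B \cap H)$; on the level of $X_0$, the preimage $HB \cdot (S/S)$ should be the orbit we are after.

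First I would fix Borel subgroups $B \subset G$ and $B_H \subset H$ with $BH$ open in $G$, and recall from the local structure theory (e.g.\ \cite[Thm.~9.1]{Timashev}, as cited above) that the multiplication map $Q_u \times L \times^{L_0} \to$ (an open subset of $X$) is an isomorphism, and that $A = L/L_0$ is a torus with $L' \subset L_0 = H_*$. Next, I would compute the $H \times A$-orbit of the base point $eS/S \in X_0$. The stabilizer of $eS/S$ in $H \times A$ is $\{(h,a) : h \cdot (aS) = S \text{ in some sense}\}$; more precisely, since $A = L/L_0$ acts on $G/S$ on the right through $L$, the orbit $(H \times A) \cdot eS/S$ equals $(HL)/S = HL/L_0 \cdot (\text{torus factor})$. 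The decomposition $HL/U_L = H \times^{H_*} (L/U_L)$ used in Brion's proof of Theorem \ref{thm: stability} is exactly the structural fact I need: dimension count gives $\dim(HL/L_0) = \dim H - \dim H_* + \dim(L/L_0) = \dim H - \dim H_* + \dim A$. So I must check this equals $\dim X_0 = \dim G - \dim S = \dim G - \dim L_0 - \dim Q_u$.

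The dimension bookkeeping is where the substance lies, and it is essentially forced by the fact that $B$ already has a dense orbit on $X = G/H$. Indeed, $\dim X_0 - \dim X = \dim H - \dim S = \dim H - \dim H_* - \dim Q_u$, while the fiber of $q: X_0 \to G/Q$ over the base point is $Q/S = L/L_0 = A$, so $\dim X_0 = \dim G/Q + \dim A = \dim Q_u + \dim A$ — wait, this needs care: $\dim G/Q = \dim Q_u$ only if $Q$ is, up to the torus, its own unipotent radical complemented appropriately; rather $\dim G/Q = \dim \overline{Q_u}= \dim Q_u$. Then $\dim X_0 = \dim Q_u + \dim A$, and I must show $\dim(HL/L_0) = \dim Q_u + \dim A$, i.e.\ $\dim H - \dim H_* = \dim Q_u$. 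This last identity is precisely what the openness of $BH$ in $G$ gives: $\dim G = \dim B + \dim H - \dim(B \cap H)$, and combining $\dim B = \dim Q_u + \dim U_L + \dim T$ with $\dim(B\cap H) = \dim B_{H_*} + (\text{torus part})$ together with $L' \subset L_0$ yields $\dim H - \dim H_* = \dim Q_u$. Alternatively, and more cleanly, I would simply invoke: $HB$ is open in $G$ (proof as in Theorem \ref{thm: stability}), $HB = HQ_u L = HL Q_u$ (local structure theory), hence the image of $HB$ in $X_0 = G/S$ under the quotient map is open; and that image is exactly $(H \times A) \cdot eS/S$ because $S = L_0 \times Q_u$ absorbs the $Q_u$ on the right and $L/L_0 = A$.

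The main obstacle I anticipate is stating the identification $\text{image of } HB \text{ in } G/S = (H\times A)\text{-orbit}$ cleanly, i.e.\ making sure the right $Q_u$-factor is genuinely absorbed by $S$ and that the residual right action is by $A = L/L_0$ and not something larger — this requires the normalization conventions of the local structure theory (the choice of slice $Z$, the identification $L_0 = H_*$) to be pinned down exactly as in the cited Theorem 9.1 of \cite{Timashev}. Once that bookkeeping is in place, openness of the orbit is immediate from openness of $HB$ in $G$ and continuity/openness of the quotient $G \to G/S$, so no further work is needed; I would not expect to need any new representation-theoretic input beyond what is already recalled in Section \ref{section: spectrum} and in Brion's argument for Theorem \ref{thm: stability}.
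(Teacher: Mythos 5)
Your final, ``cleaner'' argument is correct, but it is genuinely different from the one in the paper. You argue structurally: $HB$ is open, $HB=HQ=HLQ_{u}$ by the definition of $Q$ and the local structure theorem, the quotient $G\to G/S$ is open, and the image of $HQ$ in $G/S$ coincides with the $(H\times A)$-orbit of $eS/S$ because $Q_{u}\subset S$ is absorbed and $L/L_{0}=A$ (note that $L$ normalizes $S=L_{0}\ltimes Q_{u}$ since $L'\subset L_{0}$, so the right $A$-action is well defined and the orbit is exactly $HLS/S=HLQ_{u}/S$). The paper instead proves that the orbit map is \emph{dominant} by a purely representation-theoretic argument: a regular function on $G/S$ vanishing on the image is decomposed into matrix coefficients $m_{f_{i},v_{i,0}}$ with $v_{i,0}$ an $A$-weight vector; linear independence of characters of $A$ isolates each summand, and sphericity in the form $\langle Hv_{i,0}\rangle=V^{G}_{\lambda_{i}}$ forces $f_{i}=0$. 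Your route is shorter and makes the openness immediate rather than deducing it from density, but it leans on the precise normalizations of the local structure theorem ($S=L_{0}\ltimes Q_{u}$, $L_{0}=H_{*}$, and $HBQ=HB$); the paper's argument is chosen because it transplants almost verbatim to Theorem \ref{thm: G=HAH}, where the target is $G/H$ rather than $G/S$ and no analogous group-theoretic decomposition is available (that absence is precisely the difficulty discussed in Section \ref{section: structure}). One remark: the dimension-count digression in the middle of your proposal is muddled (and contains a false start you yourself flag), but it is not needed for the final argument, which stands on its own.
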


\begin{proof}
Let $m\in k[X_{0}]$ and suppose that $m(ha)=0$ for all $h,a$. Write $m=\sum_{i}m_{f_{i},v_{i,0}}$ with $v_{i,0}$ a highest weight vector of $A$ in $V^{G}_{\lambda_{i}}$. Then $m_{f_{i},v_{i,0}}(ha)=0$, otherwise we would find two characters of $A$ that were dependent. Hence $f_{i}(hv_{i,0})=0$ for all $h\in H$. As $\langle Hv_{i,0}\rangle=V_{\lambda}^{G}$, $f_{i}=0$. This shows $m=0$ and thus $\phi^{*}(m)=0$ implies $m=0$, whence $\phi^{*}$ injective and $\phi$ is dominant.
\end{proof}

Before we adapt this proof to show that the map $H\times A\to X=G/H$ is dominant, we prove a simple result.

\begin{lemma}
Let $V^{G}_{\lambda}$ be the representation space of a spherical weight. Let $v^{H}\ne0$ be an $H$-invariant vector. Let $v_{0}$ be an $A$-weight vector of weight $\lambda$. The coefficient of $v_{0}$ in the decomposition of $v^{H}$ in $A$-weight vectors is non-zero.
\end{lemma}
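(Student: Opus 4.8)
The plan is to argue as in the previous lemma, by cyclicity together with a density argument, rather than by matrix coefficients.

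Choose a Borel subgroup $B\subset G$ in general position with respect to $H$, so that both $BH$ and $B^{-}H$ are open, hence dense, in $G$; this is possible since each of the two conditions cuts out a non-empty open subset of the irreducible flag variety of $G$, and two such subsets must meet. Normalise the local structure data so that $A$ and the notion of highest $A$-weight refer to this $B$. Then the highest $A$-weight of $V^{G}_{\lambda}$ is the highest $T$-weight $\lambda$, realised by the $B$-highest weight vector $v_{0}$. Let $W\subset V^{G}_{\lambda}$ be the sum of all $T$-weight spaces other than $\bbC v_{0}$; it has codimension one and is precisely the kernel of the functional reading off the $v_{0}$-coefficient. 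Since $\bbC v_{0}$ is a highest weight line, its $T$-stable complement $W$ is stable under the opposite Borel subgroup $B^{-}$.

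Suppose, for contradiction, that the $v_{0}$-coefficient of $v^{H}$ is zero, i.e.\ $v^{H}\in W$. As $v^{H}$ is $H$-invariant and $W$ is $B^{-}$-stable, we get $(b^{-}h)\cdot v^{H}=b^{-}\cdot v^{H}\in W$ for all $b^{-}\in B^{-}$ and $h\in H$. Hence the Zariski-closed set $\{g\in G:g\cdot v^{H}\in W\}$ contains the dense subset $B^{-}H$, and therefore equals $G$. Consequently $V^{G}_{\lambda}=\langle G\cdot v^{H}\rangle\subseteq W\subsetneq V^{G}_{\lambda}$, a contradiction; so the $v_{0}$-coefficient of $v^{H}$ is non-zero. (Should the highest $A$-weight space of $V^{G}_{\lambda}$ be more than one-dimensional, one runs the same argument with $W$ replaced by the $B^{-}$-stable sum of the strictly lower $A$-weight spaces, and concludes that the $A$-weight-$\lambda$ component of $v^{H}$ is non-zero.)

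The one point requiring care is the choice of Borel: one needs the subspace $W$ to be invariant under a Borel subgroup and, at the same time, the base point $eH$ to lie in the open orbit of that Borel on $G/H$. A Borel in general position with respect to $H$ meets both demands, and granting this the proof reduces to the density argument already used for the preceding lemma.
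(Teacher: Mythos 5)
Your proof is correct and is essentially the paper's own argument in dual form: the paper takes the dual basis functional $f_{0}$ cutting out your hyperplane $W$, observes that $H$-invariance of $v^{H}$ gives $(hf_{0})(v^{H})=f_{0}(v^{H})=0$ for all $h\in H$, and concludes $v^{H}=0$ from $\langle Hf_{0}\rangle=(V^{G}_{\lambda})^{\vee}$, which is exactly the density-of-$BH$ input you use. Your variant --- running the density argument in $V^{G}_{\lambda}$ via the $B^{-}$-stable complement $W$ and the closed set $\{g\in G: g\cdot v^{H}\in W\}\supseteq B^{-}H$, with explicit care that $B$ can be chosen so that both $BH$ and $B^{-}H$ are open --- only makes precise a point the paper leaves implicit.
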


\begin{proof}
Let $\{v_{0},\ldots,v_{d}\}$ be a basis of $A$-weight vectors, with $v_{0}$ of $A$-weight $\lambda$ and let $\{f_{0},\ldots,f_{d}\}$ be its dual basis. Let $v^{H}\ne0$ be an $H$-fixed vector in $V_{\lambda}^{G}$. Then $f_{0}(v^{H})\ne0$. Indeed, if not, then $(hf_{0})(v^{H})=0$ for all $h\in H$. This would imply $v^{H}=0$, since $\langle Hf_{0}\rangle=(V_{\lambda}^{G})^{\vee}$, which is absurd.
\end{proof}

\begin{theorem}\label{thm: G=HAH}
The map $\psi:H\times A\to X=G/H:(h,a)\mapsto haH/H$ is dominant.
\end{theorem}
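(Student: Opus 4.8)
The plan is to mimic the proof of the previous lemma (that $H\times A\to G/S$ is dominant), but now working on $X=G/H$ instead of $X_0=G/S$. The extra ingredient needed is precisely the second lemma above: when we restrict a regular function to the image of $\psi$ and expand a highest-weight vector into $A$-weight components, the component of top $A$-weight does not vanish against an $H$-fixed functional. Concretely, $\psi^*$ is injective if and only if $\psi$ is dominant, so it suffices to show: if $m\in\bbC[X]=\bbC[G]^H$ vanishes on all $haH/H$, then $m=0$.

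First I would realize $\bbC[G/H]$ inside $\bigoplus_\lambda V^G_\lambda\otimes (V^G_\lambda)^{\vee}$ and note that the $H$-invariance forces the relevant functions to be built from $H$-fixed vectors $v^H_\lambda\in V^G_\lambda$; since $G/H$ is spherical, each isotypic piece is at most one-dimensional, so a general $m$ is a finite sum $m=\sum_i m_{f_i,v^H_{\lambda_i}}$ where the $\lambda_i$ may be taken to be distinct spherical weights and $f_i\in (V^G_{\lambda_i})^{\vee}$. Next, restricting to the image: $m(haH/H)=\sum_i f_i(\pi_{\lambda_i}(h)\pi_{\lambda_i}(a)v^H_{\lambda_i})$. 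Decomposing $v^H_{\lambda_i}=\sum_j c_{ij}v_{ij}$ into $A$-weight vectors, the dependence on $a\in A$ separates into distinct characters of $A$ (the $A$-weights occurring in the various $V^G_{\lambda_i}$), and because these characters are pairwise distinct across the (distinct, dominant) weights $\lambda_i$, vanishing of $m$ on all $(h,a)$ forces each $A$-homogeneous component to vanish separately — exactly the argument already used for $X_0$. In particular the top-weight component vanishes: $c_{i0}\,f_i(\pi_{\lambda_i}(h)v_{i0})=0$ for all $h$, where $v_{i0}$ is the $A$-weight vector of weight $\lambda_i$.

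Here is where the second lemma enters: it guarantees $c_{i0}\neq0$, i.e.\ the $A$-highest-weight component of $v^H_{\lambda_i}$ is nonzero. Hence $f_i(\pi_{\lambda_i}(h)v_{i0})=0$ for all $h\in H$, and since $\langle H v_{i0}\rangle = V^G_{\lambda_i}$ (the $A$-highest-weight vector generates the whole module under $H$, by the local structure theory / the fact that $HB$ is dense) this forces $f_i=0$. As $i$ was arbitrary, $m=0$, so $\psi^*$ is injective and $\psi$ is dominant.

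\textbf{Main obstacle.} The delicate point is the separation-of-characters step: one must be sure that when the $\lambda_i$ are distinct dominant spherical weights, the collections of $A$-weights appearing in the various $V^G_{\lambda_i}$ are ``independent enough'' that no cancellation between different $i$'s can occur — or, more safely, one first groups $m$ by its own $A$-weight decomposition on $G/S$ and handles each graded piece. This is the same subtlety flagged in the proof of the $X_0$-lemma (``otherwise we would find two characters of $A$ that were dependent''), and the cleanest fix is to observe that the top $A$-weight of $V^G_{\lambda_i}$ is $\lambda_i$ itself, so the $\lambda_i$ being distinct already separates the leading characters; one then peels off components by descending $A$-weight. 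The reliance on $\langle Hv_{i0}\rangle=V^G_{\lambda_i}$ and on $c_{i0}\neq 0$ are both supplied by the two lemmas just proved, so the remaining work is purely the bookkeeping of this peeling argument.
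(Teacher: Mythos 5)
Your proposal is correct and follows essentially the same route as the paper: expand $m=\sum_i m_{f_i,v^H_{\lambda_i}}$, decompose each $H$-fixed vector into $A$-weight components, use the preceding lemma to see the top component $v_{i,0}$ (of weight $\lambda_i$) occurs with non-zero coefficient, and conclude $f_i=0$ from $\langle Hv_{i,0}\rangle=V^G_{\lambda_i}$. The separation-of-characters issue you flag is exactly what the paper handles by picking an $A$-weight of multiplicity one in the multiset of occurring weights (guaranteed by multiplicity-freeness of $\bbC[X]$) and inducting, which is the same "peeling by descending $A$-weight" you describe.
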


\begin{proof}
The map $\psi$ is well defined: $A=L/L_{0}$ and $L_{0}=H_{*}\subset H$. Let $m\in\bbC[X]$ be zero on the image of $\psi$.  Write $m=\sum m_{f_{i},v_{i}^{H}}$, where $v_{i}^{H}\in V^{G}_{\lambda_{i}}$ is $H$-invariant. Write $v^{H}_{i}=\sum v_{i,j}$ in $A$-weights, with $v_{i,0}$ of weight $\lambda_{i}$. Furthermore, write $wts(i)$ for the set of $A$-weights $\sigma$ such that a non-zero weight vector of weight $\sigma$ occurs in the decomposition of $v^{H}_{i}$ into $A$-weight vectors. Let $wts(m)$ be the union of all these sets for which $m_{f_{i},v_{i}^{H}}\ne0$. This is a set with multiplicities. Let $\lambda_{i}\in wts(m)$ be an element with multiplicity one. Such a weight exists because $\bbC[X]$ decomposes multiplicity free as $G$-module.

The assumption $m(ha)=0$ yields a linear expression of characters of $A$ which is equal to zero. The coefficient of $\lambda_{i}$ is $f_{i}(hv_{i,0})$ which must thus be zero for all $h\in H$. Since $\langle Hv_{i,0}\rangle=V^{G}_{\lambda_{i}}$, it follows that $f_{i}=0$. An induction argument implies that $wts(m)=\emptyset$. In particular, $m=0$, as was to be shown.
\end{proof} 
 
For the compact real forms we obtain a map $H_{0}\times A_{0}\to G_{0}/H_{0}$ whose image $Y=\psi_{0}(H_{0}\times A_{0})$ is a compact neighborhood of $eH_{0}$. The question is whether $\psi_{0}$ is surjective. A possible approach is to show that a matrix coefficient that is arbitrarily small on $Y$ cannot have value $1$ in a given point $x\in Y^{c}$.
 
In the case $\Spin(9)/\Spin(7)=S^{15}$ we can parametrize the $\Spin(7)$-orbits by points of a sphere $S^{2}\subset S^{15}$. Indeed, $S^{15}\subset\bbR^{16}$ is the $\Spin(9)$-orbit of the first basis vector $e_{1}$. The next 8 basis vectors $(e_{2},\ldots,e_{9})$ span the representation space for the $\Spin(7)$-action of highest weight $\omega_{3}$ and the last 7 basis vectors $(e_{10},\ldots,e_{16})$ that of highest weight $\omega_{1}$. Given a point $(re_{1},v,w)\in\bbR^{1}\oplus\bbR^{8}\oplus\bbR^{7}$ with $r^2+||v||^{2}+||w||^{2}=1$, we can translate it to $re_{1}+||v||e_{2}+||w||e_{10}$ using $\Spin(7)$. First we bring the second coordinate $v$ in position $||v||e_{2}$ using the fact that $S^{7}$ is homogeneous for the $\Spin(7)$-action. The isotropy group for this action is $\G_{2}$ which acts transitively on $S^{6}$ and so we rotate $w$ to $||w||e_{10}$. 

However, this is not the decomposition we are aiming for. We hope to find a general approach understand polar decompositions for spherical pairs. Possibly the techniques as the ones used in \cite{Kobayashi3, Tanaka} may be useful. However, as far as we know, the polar decomposition $G_{0}=H_{0}A_{0}H_{0}$ for compact spherical pairs remains an open problem.\\ \\

\textbf{Acknowledgment}. I am grateful to \textsc{Michel Brion, Dennis Brokemper} and \textsc{Pablo Rom{\'a}n} for their comments on earlier versions of this manuscript.

\end{document}